\newcommand{\codim}{\operatorname{codim}}
\newcommand{\Spec}{\operatorname{Spec}}
\newcommand{\ord}{\operatorname{ord}}
\newcommand{\Sym}{\operatorname{Sym}}
\newcommand{\A}{\mathbb{A}}
\newcommand{\G}{\mathbb{G}}
\newcommand{\K}{\mathbb{K}}
\newcommand{\N}{\mathbb{N}}
\renewcommand{\P}{\mathbb{P}}
\newcommand{\Z}{\mathbb{Z}}
\newcommand{\cE}{{\mathcal E}}
\newcommand{\cI}{{\mathcal I}}
\newcommand{\cO}{{\mathcal O}}
\newcommand{\cP}{{\mathcal P}}
\newcommand{\mf}{{\mathfrak{m}}}
\numberwithin{equation}{section}
\theoremstyle{definition}
\newtheorem{definition}{Definition}[section]
\newtheorem{remark}[definition]{Remark}
\newtheorem{example}[definition]{Example}
\theoremstyle{plain}
\newtheorem{lemma}[definition]{Lemma}
\newtheorem{proposition}[definition]{Proposition}
\newtheorem{theorem}[definition]{Theorem}
\newtheorem{corollary}[definition]{Corollary}
\providecommand\@dotsep{5}
\begin{document}

\title{Hyperflex loci of hypersurfaces}

\author{Cristina Bertone}
\address{Dipartimento di Matematica, Universit\`a di Torino, Torino, Italy}
\email{cristina.bertone@unito.it}
\urladdr{https://sites.google.com/view/cristinabertone/home} 

\author{Martin Weimann}
\address{LMNO, University of Caen Normandie, Caen, France
}
\email{martin.weimann@unicaen.fr}
\urladdr{https://weimann.users.lmno.cnrs.fr/} 

\begin{abstract}
The $k$-flex locus of a projective hypersurface $V\subset \P^n$ is the locus of points $p\in V$ such that  there is a line with order of contact at least $k$ with  $V$ at   $p$.
Unexpected contact orders occur when $k\ge n+1$. The case $k=n+1$ is known as the classical flex locus, which has been studied in details in the literature. This paper is dedicated to compute the dimension and the degree of the $k$-flex locus of a general degree $d$ hypersurface for any value of $k$. As a corollary, we compute the dimension and the degree of the biggest ruled subvariety of a general hypersurface. 
We  show moreover  that through a generic $k$-flex point passes a unique $k$-flex line and that this line has contact order exactly $k$ if $k\le d$. The proof is based on the computation of the top Chern class of a certain vector bundle of relative principal parts, inspired by and generalizing a work of Eisenbud and Harris. 
\end{abstract}

\subjclass[2010]{Primary 14J70; Secondary 14N15}
\keywords{Hypersurfaces, hyperflex locus, Chow ring, Chern classes}

\maketitle


\section{Introduction}
Let $V\subset \P^n$ be a reduced projective hypersurface defined over an algebraically closed field $\K$ of caracteristic zero and let $k\in \N$. 
 The $k$-flex locus of $V$ 
 is the locus of points $p\in V$ 
 such that  there is a line with order of contact at least $k$ with  $V$ at  $p$
 (see Definition \ref{def:1} for a precise statement). It is not difficult to see that this locus is a closed subvariety $V_k\subseteq V$. 
 
It is well known \cite{BDSW} that $V_k=V$ when $k\le n$ and the first interesting case occurs when $k = n+1$, in which case the subvariety $V_{n+1}\subset V$ is classically called the \emph{flex locus} of $V$. For instance, the flex locus of a projective plane curve coincides with the usual subset of inflexion points, and is determined by the vanishing of the hessian determinant of the homogeneous polynomial defining the curve. 
The study of the flex locus of curves and surfaces is a classical subject of algebraic geometry, studied from the XIXth century by Monge, Salmon and Cayley among others. In the last decades, there has been a regain of interest to this subject due to its applications in incidence geometry  \cite{BDSW, EisHar,GK15,Kat14,Kol15, SS18,Tao14}. In the recent article \cite{BDSW}, the authors show that the flex locus of a general hypersurface $V\subset \P^n$ of degree $d\ge n$ is a codimension one subvariety of $V$ and they obtain an explicit formula for its degree in terms of the parameters $d$ and $n$, generalizing the famous Salmon's degree formula $11d^2-24d$ for the case of surfaces in $\P^3$ \cite{salmon}.

In this paper, we generalize the results of \cite{BDSW} to any $k$, dealing with \emph{hyperflex loci}\footnote{In \cite{EisHar}, the terminology \emph{hyperflex} is rather used for values of $k$ such that $V_k$ is expected to be empty.}, high values of $k$ corresponding to a \lq\lq highly non expected\rq\rq contact order. 
Let us remark that if $V$ has degree $d$, a line with contact order at least $d+1$ with $V$ is necessarily contained in $V$ by Bézout's theorem. Hence, any degree $d$ hypersurface admits a filtration
\begin{equation}\label{eq:stratification}
V=V_1=\cdots =V_n \supseteq V_{n+1}  \supseteq \,\, \cdots \,\,\supseteq V_{d}\supseteq V_{d+1} =V_{d+2}=\cdots = V_{\infty},
\end{equation}
where $V_{\infty}$ is the biggest ruled (union of lines) subvariety  of $V$. Thus $k=d+1$ is the highest interesting value to consider. Note that if $d<n$, the hypersurface is necessarily ruled, and there is nothing to do.

\subsection*{Main results.} This paper is dedicated to compute the dimension and  the degree of each $V_k$ for a \emph{general degree $d$ hypersurface} $V$, that is for all $V$ belonging to a certain non empty Zariski open set of the space $\P^N$ of all degree $d$ hypersurfaces of $\P^n$. We prove :

\begin{theorem}\label{thm:main} Let $V\subset \P^n$ be a general hypersurface of degree $d$. Let $k\in \N$ such that $n+1 \le k \le d+1$, which by \eqref{eq:stratification} are the only meaningful values.

\begin{enumerate}[(i)]
\item\label{it1thmMain} If $k>2n-1$, the $k$-flex locus is empty. Otherwise, $V_k\subset V$ is a subvariety of pure dimension $$\dim V_k=2n-k-1,$$
except if $k=2n-1=d+1$ in which case $V_k$ is also empty.
\item\label{it2thmMain} Assuming $V_k$ non empty, its degree is
 \begin{equation}\label{eq:main_theorem}
\deg(V_k)=N_k(n,d):=\sum_{m=n-1}^{k-1} \lambda_{m}(d,k)\left(\binom{m}{n-1}-\binom{m}{n}\right), 
\end{equation}
where the coefficients  $\lambda_m(d,k)\in \N$ are defined by the polynomial equality $$d\prod_{j=1}^{k-1}\left(jX+(d-2j)\right)=\sum_{m=0}^{k-1}\lambda_m(d,k)X^m.$$
\item\label{it3thmMain} Through a general $p\in V_k$, there is a unique $k$-flex line (Definition \ref{def:1}). This line has contact order exactly $k$ with $V$ at $p$ if $k\le d$, and is contained in $V$ otherwise.
\end{enumerate}
\end{theorem}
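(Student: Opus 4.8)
The plan is to derive all three parts of Theorem~\ref{thm:main} from a single incidence variety together with a Chern-class computation, in the spirit of Eisenbud--Harris. Let $\cF=\{(p,\ell)\in\P^n\times\G(1,n):p\in\ell\}$ be the point--line flag variety, smooth projective of dimension $2n-1$, with projections $\pi_1\colon\cF\to\P^n$ (a $\P^{n-1}$-bundle) and $\pi_2\colon\cF\to\G(1,n)$ (a $\P^1$-bundle, with fibre over $\ell$ the line $\ell$ itself). The line bundle $\pi_1^*\cO_{\P^n}(d)$ restricts to $\cO_{\P^1}(d)$ on each $\pi_2$-fibre, so one may form the rank $k$ bundle $\cP:=\cP^{k-1}_{\cF/\G}\big(\pi_1^*\cO_{\P^n}(d)\big)$ of relative principal parts of order $k-1$: its fibre at $(p,\ell)$ is the space of $(k-1)$-jets at $p$ of sections of $\cO_\ell(d)$. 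A degree $d$ form $F$ cutting out $V$ induces a section $\sigma_F$ of $\cP$, with $\sigma_F(p,\ell)$ the $(k-1)$-jet of $F|_\ell$ at $p$; by construction its zero scheme $\wt V_k:=Z(\sigma_F)$ is exactly the set of pairs $(p,\ell)$ such that $\ell$ has contact order $\ge k$ with $V$ at $p$, and $V_k=\pi_1(\wt V_k)$. When $k=d+1$, contact order $\ge d+1$ forces $\ell\subset V$ by Bézout, so $\wt V_{d+1}=\pi_2^{-1}(F_1(V))$ for the Fano scheme $F_1(V)$ of lines on $V$, and this boundary case is treated apart.

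Next I would determine $\dim\wt V_k$. For $k\le d$ the sections $\sigma_F$, $F\in\P^N$, globally generate $\cP$: restriction $H^0(\P^n,\cO(d))\to H^0(\ell,\cO_\ell(d))$ is surjective, and the $(k-1)$-jet map on binary forms of degree $d$ is surjective for $k\le d+1$. Hence on $\P^N\times\cF$ the universal section of $\pr_\cF^*\cP$ is transverse to the zero section, so its vanishing locus $\cI$ is smooth, irreducible (a $\P^{N-k}$-bundle over $\cF$), of dimension $N+2n-1-k$; by generic smoothness in characteristic zero, for general $F$ the section $\sigma_F$ is transverse and $\wt V_k$ is smooth of pure dimension $2n-1-k$, or empty. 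It is empty for general $F$ exactly when $c_k(\cP)=0$, and the Chern number computed in the last step is positive throughout $n+1\le k\le 2n-1$ with $k\le d$; for $k>2n-1$ the expected dimension is negative, forcing emptiness. The case $k=d+1$ is handled instead through Fano-scheme theory: for general $V$, $F_1(V)$ is empty if $d\ge 2n-2$ and smooth of dimension $2n-3-d$ if $d\le 2n-3$, which yields the emptiness for $k=2n-1=d+1$ and, otherwise, a smooth $\wt V_{d+1}$ of the expected dimension still representing $c_{d+1}(\cP)$ (its codimension being the expected one). In all nonempty cases $\dim V_k\le 2n-1-k$, with equality once $\pi_1|_{\wt V_k}$ is shown generically finite.

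Part~\ref{it3thmMain} is the crux: $\pi_1\colon\wt V_k\to V_k$ is birational. Irreducibility of $\wt V_k$ for general $F$ follows from irreducibility of $\cI$ and a Stein-factorization/monodromy argument showing the finite part of $\cI\to\P^N$ has degree one. The degree-one statement for $\pi_1$ over a general $p\in V_k$ is the delicate point: the fibre $\pi_1^{-1}(p)\cap\wt V_k$ is the common zero locus, in the $\P^{n-1}$ of lines through $p$, of a system of $k-1$ forms of degrees $1,2,\dots,k-1$ whose coefficients are polynomial in the jet of $F$ at $p$, and one must show that for general $F$ and general $p\in V_k$ this is a single reduced point. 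I expect to do this either by such a direct local analysis (the eliminant cutting out $V_k$ has a unique simple common root at its general point, in analogy with the discriminant hypersurface) or by specializing $V$ to a hypersurface on which the $k$-flex line through a general point of $V_k$ can be exhibited and counted, invariance of the count following since the relevant dimensions remain as expected. Granting this, $\dim V_k=2n-1-k$ and $\deg V_k=\deg\wt V_k$. For the contact-order claim, part~\ref{it1thmMain} gives $\dim V_{k+1}=2n-k-2<\dim V_k$ when $k+1\le 2n-1$, and $V_{k+1}=\emptyset$ when $k=2n-1$; in either case $V_{k+1}\subsetneq V_k$, so a general $p\in V_k$ lies on no line of contact order $\ge k+1$, whence its unique $k$-flex line has contact order exactly $k$ if $k\le d$, and is contained in $V$ by Bézout if $k=d+1$.

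Finally, part~\ref{it2thmMain}. Since $\pi_1\colon\wt V_k\to V_k$ is birational and $\wt V_k$ has the expected codimension, its class in the Chow ring of $\cF$ is $c_k(\cP)$, so with $h:=\pi_1^*c_1(\cO_{\P^n}(1))$ one has
\[
\deg V_k=\int_{\cF} c_k(\cP)\,h^{2n-1-k}.
\]
The filtration of $\cP$ by jet order has successive quotients $\pi_1^*\cO_{\P^n}(d)\otimes\omega_{\cF/\G}^{\otimes j}$ for $j=0,\dots,k-1$, so the Chern roots of $\cP$ are $dh+j\eta$ with $\eta:=c_1(\omega_{\cF/\G})$, whence $c_k(\cP)=\prod_{j=0}^{k-1}(dh+j\eta)$. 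The relative Euler sequences give $\eta=-2h+\pi_2^*m$ for a suitable divisor class $m$ on $\G$, so $c_k(\cP)=dh\cdot\prod_{j=1}^{k-1}\big((d-2j)h+j\,\pi_2^*m\big)$; pushing this product forward along $\pi_1$ by means of the Segre classes of the tautological rank $n$ bundle on $\P^n$ (whose Chern data carries the binomial coefficients $\binom{m}{n-1}$ and $\binom{m}{n}$) reduces the integral to a coefficient extraction, which after the substitution recorded by the generating identity $d\prod_{j=1}^{k-1}(jX+(d-2j))=\sum_m\lambda_m(d,k)X^m$ produces exactly $N_k(n,d)$; one then checks $N_k(n,d)>0$ over the asserted range. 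The main obstacle is the degree-one statement in part~\ref{it3thmMain}; the Chern-class bookkeeping, though lengthy, is routine once the intersection ring of $\cF$ is set up.
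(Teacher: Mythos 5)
Your skeleton is the paper's: the flag variety $\Phi$, the bundle of relative principal parts with Chern roots $dh+j\eta$, $\eta=\sigma_1-2h$, the section $\sigma_F$ whose zero scheme is $\wt{V}_k$, and the degree as $\int_\Phi c_k\cdot h^{2n-1-k}$ (your pushforward to $\P^n$ via Segre classes would recover the same numbers the paper gets by pushing forward to $\G(1,n)$ and using Pieri plus Catalan trapezoids). But the step you yourself call the crux --- that $\pi_1\colon \wt{V}_k\to V_k$ is birational, equivalently that a general $k$-flex point lies on a unique $k$-flex line --- is left as ``I expect to do this either by\dots or by\dots'', and without it parts (i) (the dimension of $V_k$ rather than of $\wt{V}_k$), (ii) (transferring $\deg \wt{V}_k$ to $\deg V_k$) and (iii) are all unproven. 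This is a genuine gap, not bookkeeping. The paper closes it by a short explicit construction (Proposition \ref{prop:birational}): since $\Gamma_k$ is irreducible it suffices to exhibit a single pair $(\alpha,p_0)$ whose fiber is one reduced point; writing $F=\sum_j x_0^{d-j}Q_j$ at $p_0=(1,0,\dots,0)$, the fiber is $\{Q_0=\cdots=Q_{k-1}=0\}\subset\P^{n-1}$, and since $k-1\ge n$ one can choose forms of degrees $1,\dots,k-1$ with a unique reduced common zero by B\'ezout and Bertini; the same specialization (Remark \ref{rem:contact_order}) gives the exact contact order $k$ when $k\le d$. Your second suggested strategy (specialization) is essentially this, but it must actually be carried out; your first (an ``eliminant'' analogy with the discriminant) is not substantiated.

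The non-emptiness argument is also shakier than you present it. You propose to get non-emptiness of $\wt{V}_k$ from positivity of the Chern number and defer ``one then checks $N_k(n,d)>0$''. That check is not routine: the coefficients $\lambda_m(d,k)$, hence the $\mu_\ell$, have mixed signs as soon as $d<2(k-1)$ (e.g.\ $d=3$, $k=4$), so positivity of the sum is not apparent; moreover the geometric reading of $\int_\Phi c_k(\cP)\,\zeta^{2n-1-k}>0$ already presupposes that $\beta$ restricted to $\wt{V}_k$ is generically finite onto its image ($\zeta$ is only nef, being pulled back from $\P^n$), which is exactly the unproved birationality/finiteness above --- so as stated the argument is close to circular unless you prove $N_k(n,d)>0$, or at least $c_k(\cP)\neq 0$ in $A(\Phi)$, by a direct computation you have not done. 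The paper sidesteps this entirely: it proves surjectivity of $\Gamma_k\to\P^N$ (Proposition \ref{prop:submersion}) by writing down the explicit hypersurface $x_1+x_2x_n^2+\cdots+x_{n-1}x_n^{2n-4}+x_n^{2n-1}$ (suitably truncated when $k=2n-2=d+1$) and checking that the Jacobian of the local equations $f_0=\cdots=f_{k-1}=0$ has maximal rank, and it rules out the case $k=2n-1=d+1$ by a ruledness/dimension argument; your Fano-scheme substitute for $k=d+1$ is fine as a quoted standard fact, but for $k\le d$ you still need either such an explicit example or an honest proof of positivity. In short: right framework, but the two geometric inputs the paper supplies by explicit constructions --- the one-point-fiber argument for birationality and the submersion example for surjectivity --- are missing from your proposal.
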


\noindent
For $m=n-1$, the second binomial coefficient in \eqref{eq:main_theorem} is $\binom{n-1}{n}=0$ by convention. 

\begin{corollary}\label{cor:1}
The biggest ruled subvariety $V_{\infty}$ of a general degree $d$ hypersurface $V$ is empty if $d\ge 2n-2$. Otherwise, 
$$\dim V_\infty=2n-d-2\quad {\rm and}\quad \deg V_\infty=N_{d+1}(n,d).
$$
In particular, a general hypersurface of degree $d=2n-3$ contains exactly $N_{2n-2}(n,2n-3)$ lines.
\end{corollary}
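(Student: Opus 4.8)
The statement is essentially a specialization of Theorem~\ref{thm:main} to $k=d+1$, so the plan is short; throughout we stay in the meaningful range $d\ge n$ of the main theorem. The key preliminary observation is that $V_\infty=V_{d+1}$. This is precisely the right-hand end of the filtration \eqref{eq:stratification}, and I would justify it as follows: a line $\ell$ not contained in $V$ meets $V$ in a degree-$d$ divisor on $\ell\cong\P^1$, hence has contact order at most $d$ with $V$ at any point; so a line with contact order at least $d+1$ at some point must be contained in $V$, while conversely a line contained in $V$ has infinite contact order with $V$ everywhere along it. Therefore $p\in V_{d+1}$ if and only if $p$ lies on a line contained in $V$, i.e. $V_{d+1}$ is the union of all lines of $V$, which is the biggest ruled subvariety $V_\infty$.

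Granting this, I would simply apply Theorem~\ref{thm:main} with $k=d+1$. For part~\eqref{it1thmMain}, the two exclusion conditions ``$k>2n-1$'' and ``$k=2n-1=d+1$'' become respectively $d\ge 2n-1$ and $d=2n-2$, which together say that $V_\infty=\emptyset$ exactly when $d\ge 2n-2$. In the complementary range $n\le d\le 2n-3$ we are outside the exceptional case, so \eqref{it1thmMain} yields that $V_\infty$ has pure dimension $2n-(d+1)-1=2n-d-2$, and \eqref{it2thmMain} yields $\deg V_\infty=N_{d+1}(n,d)$.

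It remains to treat $d=2n-3$, where $\dim V_\infty=2n-(2n-3)-2=1$. Since $V_\infty$ is one-dimensional and is a union of lines, it is a finite union of lines; by Theorem~\ref{thm:main}\eqref{it3thmMain} a general point of $V_\infty$ lies on a unique such line, so for general $V$ these lines are pairwise distinct, $V_\infty$ is their reduced union, and any line of $V$ occurs among its irreducible components. Hence $V$ contains exactly $\deg V_\infty=N_{2n-2}(n,2n-3)$ lines (a consistency check: for $n=3$, $d=3$ this should reproduce the $27$ lines on a general cubic surface, i.e. $N_4(3,3)=27$, which indeed holds). There is no genuine obstacle in any of this: it is bookkeeping on top of Theorem~\ref{thm:main}, the only mildly delicate point being the last paragraph's passage from ``ruled of dimension one'' together with the uniqueness statement \eqref{it3thmMain} to ``a reduced union of exactly $\deg V_\infty$ distinct lines accounting for all lines of $V$''.
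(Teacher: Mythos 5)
Your proposal is correct and follows essentially the same route as the paper: identify $V_\infty=V_{d+1}$ via the filtration \eqref{eq:stratification} (justified by Bézout), then specialize Theorem \ref{thm:main} to $k=d+1$, with the $d=2n-3$ case giving a one-dimensional union of lines whose degree counts the lines. Your extra care about reducedness/distinctness of the lines and the $27$-lines sanity check are just more detail than the paper's brief argument, not a different method.
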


%
%

When $k=n+1$, we recover the degree formula of the classical flex locus of $V$ obtained in \cite{BDSW}, see Corollary \ref{cor:deg_flex_locus}. The proof in \cite{BDSW} is based on the theory of multidimensional resultants which appears to be useful since the flex locus is a codimension one subvariety of $V$. However, let us emphasize that the flex case is already subtle since resultants only lead to affine equations of the flex locus (Remark \ref{rem:flex_vs_hyperflex}). For higher values of $k$, the $k$-flex locus has higher codimension  and using resultants seem hopeless. We rather follow a strategy inspired by Eisenbud and Harris in \cite[Chapter 11]{EisHar} : we compute the degree of the $k$-flex locus in terms of the top Chern class of a suitable \emph{vector bundle of relative principal parts} $\mathcal{E}\to \Phi$ over the incidence subvariety $\Phi\subset \P^n\times \G(1,n)$.

\subsection*{Organisation of the paper}
Section \ref{sec2} is dedicated to give explicit local equations of the incidence variety $\Gamma_k\subset \P^N\times \P^n\times  \G(1,n)$ given by those triplets $(V,p,L)$ such that the line $L$ has contact order at least $k$ with $V$ at $p$. This result is used in Section \ref{sec3}, where we show that the $k$-flex locus of a general hypersurface has the expected dimension, and that the $k$-flex line at a general $k$-flex point is unique and has the expected contact order (Theorem \ref{thm:generic}). In Section \ref{sec4}, we introduce the vector bundle of relative principal parts  $\mathcal{E}\to \Phi$ and we explain its relation with our problem :  any degree $d$ hypersurface $V=\{f=0\}$ determines a global section $\tau_f\in H^0(\Phi,\mathcal{E})$  whose value at $(p,L)$ is the restriction of $f$ to the $k^{th}$-infinitesimal neighborhood of $L$ at $p$ (Theorem \ref{thm:principal_part}). This key result is mainly \cite[Theorem 11.2]{EisHar}, but we sketch a self-contained proof. We deduce that the degree of the top Chern class of $\mathcal{E}$ determines the degree of the $k$-flex locus of a general hypersurface of fixed degree. In Section \ref{sec5}, after reminding the main properties of Chern classes (Theorem \ref{thmChernClasses}), we follow \cite{EisHar} to compute the top Chern class of $\mathcal{E}$ (Theorem \ref{thm:degree_in_terms _of_Chern_classes}) using Whitney's formula together with Schubert calculus to perform  computations in the Chow ring of the Grassmannian. We give a closed formula for some products of Schubert classes in terms of entries of Catalan's trapezoids (Lemma \ref{lem:multRule}). This allows us to derive an explicit degree of the $k$-flex locus of $V$ (Proposition \ref{prop:explDegree}), leading to the proof of Theorem \ref{thm:main}.

\section{Dimension and equation of the incidence variety}\label{sec2}

Let us start by giving a precise definition of the $k$-flex locus.

\begin{definition}\label{def:1} Let $V\subset \P^n$ be a subvariety.
 \begin{itemize}
 \item   The \emph{order of contact} between $V$ and a line $L$ of $\P^n$ at some point $p$ is 
\begin{equation*}
\ord_p(V,L)=\dim_{ K}(\mathcal{O}_{L,p}/\iota^*\mathcal{I}_{V}),
\end{equation*} 
where $\mathcal{O}_{L,p}$ is the local ring of $L$ at $p$,
$\mathcal{I}_V$  the ideal sheaf of $V$, and
$\iota\colon L\hookrightarrow \P^n$ the inclusion map.
\item The \textit{osculating order} of $V$
  at $p$ is defined as
\begin{equation*}
\mu_p(V)=\sup_{L} \, \ord_p(V,L),
\end{equation*} 
where the supremum is taken over the lines $L$ of $\P^n$ passing through
$p$. 
\item Let $k\in \N$. The \emph{$k$-flex locus} of $V$ is
  $$
V_k:=\{p\in V\,\,|\,\,\mu_p(V) \ge k\}.
$$
A point $p\in V_k$ is a $k$-flex point. A line with contact order at least $k$ with $V$ at some point is a $k$-flex line. 
\end{itemize}
\end{definition}

We have $\ord_p(V,L)> 0$ if and only if $p\in V\cap L$. We have $\ord_p(V,L)= 1$ if and only if $L$
intersects $V$ transversally at $p$, and $\ord_p(V,L)=+\infty$ if and
only if $L$ is contained in $V$. Moreover, the sequence of inclusions \eqref{eq:stratification} holds.

\subsection{The incidence variety.} Let $n\ge 2$ and denote $\G(1,n)$ the grassmannian of lines in $\P^n$. Let us fix $d\in \N^\times$ a degree and let us denote by $\P^N=\P^{\binom{n+d}{n}-1}$ the space of projective hypersurfaces of $\P^n$ of degree $d$. For $\alpha\in \P^N$ we denote by $V_{\alpha}\subset \P^n$ the corresponding degree $d$ hypersurface.

For all $k\in \N$, we define the incidence variety
$$
\Gamma_k=\{(\alpha,p,L)\in \P^N\times \P^n\times \G(1,n),\,\, \ord_p(V_\alpha,L)\ge k\}.
$$

\begin{proposition}\label{prop:dimSigmak}
The set $\Gamma_k$ is a smooth irreducible projective variety of dimension 
$$
\dim \Gamma_k =N+2n-\min(k,d+1)-1\ge 0.
$$
\end{proposition}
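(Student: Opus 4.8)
The strategy is to exhibit $\Gamma_k$ as the total space of a projective bundle over a smooth irreducible base, from which smoothness, irreducibility and the dimension count follow at once. Consider the projection $\pi\colon \Gamma_k\to \P^n\times \G(1,n)$ onto the last two factors, and let $\Phi=\{(p,L): p\in L\}$ be the flag incidence variety; since $\ord_p(V_\alpha,L)\ge k\ge 1$ forces $p\in L\cap V_\alpha$, the image of $\pi$ lies in $\Phi$. The base $\Phi$ is a $\P^1$-bundle over $\G(1,n)$, hence smooth and irreducible of dimension $2n-1$. The plan is to show that over $\Phi$ the fiber of $\pi$ is a linear subspace of $\P^N$ of constant dimension $N-\min(k,d+1)$: the vanishing conditions that cut out $\Gamma_k$ inside $\P^N\times\Phi$ are, fiberwise in $\alpha$, the requirement that a homogeneous polynomial $f$ of degree $d$ lie in the $k$-th power of the ideal of the point $p$ when restricted to the line $L$, which is $\min(k,d)$ independent linear conditions on the coefficients — or $\min(k,d+1)$ once one accounts correctly for the case $k\ge d+1$ where $L\subset V_\alpha$ is forced (here a degree $d$ form vanishes on $L$, i.e. $d+1$ linear conditions).

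More precisely, I would first reduce to the affine/local picture of Section~\ref{sec2}: after choosing coordinates so that $p=[1:0:\cdots:0]$ and $L$ is the line $x_2=\cdots=x_n=0$, restricting $f$ to $L$ and expanding in the local parameter $t=x_1/x_0$ gives $f|_L=\sum_{j=0}^d c_j(\alpha)\,t^j$ with the $c_j$ linear in the coefficients of $f$; then $\ord_p(V_\alpha,L)\ge k$ is exactly $c_0=\cdots=c_{\min(k,d+1)-1}=0$ (with the understanding that for $k\ge d+1$ all $d+1$ coefficients vanish, forcing $f|_L\equiv 0$). These are $\min(k,d+1)$ linear equations, and one checks they are independent — e.g. by exhibiting explicit monomials ($x_0^{d-j}x_1^j$) realizing each $c_j$ independently — uniformly over $(p,L)\in\Phi$. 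Hence $\Gamma_k\to\Phi$ is a $\P^{N-\min(k,d+1)}$-subbundle of the trivial bundle $\P^N\times\Phi\to\Phi$; being a projective subbundle with constant fiber dimension it is itself a projective bundle, so smooth and irreducible, and
$$
\dim\Gamma_k=\dim\Phi+\bigl(N-\min(k,d+1)\bigr)=(2n-1)+N-\min(k,d+1)=N+2n-\min(k,d+1)-1.
$$
The nonnegativity $\dim\Gamma_k\ge 0$ follows since $\min(k,d+1)\le d+1\le N+2n-1$ for $n\ge 2$, $d\ge 1$ (indeed $N=\binom{n+d}{n}-1\ge d$).

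The main obstacle is making the local linear-algebra description genuinely uniform over all of $\Phi$ — i.e., verifying that the $\min(k,d+1)$ conditions $c_j(\alpha)=0$ remain independent linear forms in $\alpha$ for every $(p,L)$, not just generically — and, relatedly, handling the transition $k\le d$ versus $k\ge d+1$ cleanly (when $k=d+1$ the contact order jumps to $+\infty$, so the naive count "$k$ conditions" must be replaced by "$d+1$ conditions"). I expect this to be dispatched using the explicit equations for $\Gamma_k$ established just before this proposition (the promised "explicit local equations" of Section~\ref{sec2}), so that the content of the proof is really: (1) identify the linear-algebra fiber, (2) invoke the explicit equations to see the fiber dimension is locally constant $=N-\min(k,d+1)$, (3) conclude via the projective-bundle structure over the smooth irreducible $\Phi$. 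Smoothness and irreducibility are then formal; the dimension formula is the arithmetic above.
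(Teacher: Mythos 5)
Your proposal is correct and takes essentially the same route as the paper: both view $\Gamma_k$ as a $\P^{N-\min(k,d+1)}$-bundle over the smooth irreducible flag variety $\Phi$ (itself a $\P^1$-bundle over $\G(1,n)$), with the fiber over $(p,L)$ cut out by $\min(k,d+1)$ independent linear conditions on $\alpha$. The only cosmetic difference is that the paper expresses the independence as surjectivity of the restriction map $H^0(\P^n,\cO_{\P^n}(d))\to H^0(\cO_{L}(d)/\mf_{p}^{k})$, whereas you verify it with explicit monomials $x_0^{d-j}x_1^j$ in adapted coordinates.
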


\begin{proof}
The projection $\Gamma_k\to \P^n\times \G(1,n)$ has image 
$$
\Phi=\{(p,L)\in \P^n\times \G(1,n),\, p\in L\},
$$ 
which is itself a $\P^1$-bundle over $\G(1,n)$. 
Since $\G(1,n)$ is an irreducible smooth projective variety of dimension $2n-2$, it follows that $\Phi$ is an irreducible smooth projective variety of dimension $2n-1$. 
If $(p,L)\in \Phi$, then $\ord_p(V_\alpha,L)\ge k$ if and only if the degree $d$ homogeneous polynomial defining $V_\alpha$  lies in the kernel of the restriction map
\begin{equation}\label{eq:mPk}
H^0(\P^n,\cO_{\P^n}(d))\to H^0(\cO_{L}(d)/\mf_{p}^{k}),
\end{equation}
where $\mf_{p}$ stands for the maximal ideal of $\cO_L$ at $p$. The target   $\K$-vector space has dimension $\min(k,d+1)$. The restriction map being surjective, its kernel is a $\K$-vector space of dimension $N+1-\min(k,d+1)$ (which is $>0$ assuming $n\ge 2$). It follows that $\Gamma_k$ is a $\P^{N-\min(k,d+1)}$-bundle over $\Phi$. The claim follows. 
\end{proof}

\subsection{Explicit local equations for $\Gamma_k$.}
Let us choose homogeneous coordinates $x=(x_0,\ldots,x_n)$ for $\P^n$ and consider the general homogeneous polynomial of degree $d$ 
\begin{equation*}
F=\sum_{|I|=d} a_{I}x^{I} \in \K[a,x].
\end{equation*}
The sum is over the vectors $I\in \N^{n+1}$ of length $d$, and 
$a=\{a_{I}\}_{|I|=d}$ stands for the set of $\binom{n+d}{n}$
variables corresponding to the coefficients of $F$. Thus, $F$ is an irreducible polynomial in $\K[a,x]$, bihomogeneous of bidegree $(1,d)$.

\begin{lemma}\label{lem:ordp}
Let $\alpha\in \P^N$ and let $p,q\in \P^n$ be  two distinct points, defining a line $L$. We have the equality
$$
\ord_p(V_\alpha,L)=\ord_t(F(\alpha,p+tq))
$$ (independently of the chosen homogeneous coordinates for $p$ and $q$).
\end{lemma}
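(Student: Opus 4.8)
The plan is to reduce the intersection-theoretic statement $\ord_p(V_\alpha, L) = \ord_t(F(\alpha, p+tq))$ to a local computation on the line $L$, using the fact that $L$ is parametrized by $[s:t] \mapsto sp + tq$ and that $p$ corresponds to the parameter value $t=0$ (i.e. $[1:0]$). First I would observe that both sides are independent of the chosen homogeneous representatives of $p$ and $q$: rescaling $p$ or $q$ rescales $F(\alpha, p+tq)$ by a nonzero constant (in the case of $p$) or reparametrizes $t \mapsto ct$ (in the case of $q$), neither of which changes the order of vanishing at $t=0$; similarly the left-hand side is manifestly intrinsic.

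Next I would make the local rings explicit. The line $L$ is isomorphic to $\P^1$ via $\varphi\colon \P^1 \to L \subset \P^n$, $[s:t] \mapsto [sp+tq]$, with $p = \varphi([1:0])$. In the affine chart $s=1$, the local ring $\mathcal{O}_{L,p}$ is the localization $\K[t]_{(t)}$, and its maximal ideal is generated by $t$. The ideal sheaf $\mathcal{I}_V$ is locally generated by the dehomogenization of a defining polynomial $f = F(\alpha, x)$ of $V_\alpha$; pulling back along $\iota$ and passing to the chart $x_0 = 1$ (or whichever coordinate is nonzero at $p$), the pullback $\iota^*\mathcal{I}_V$ is the ideal of $\mathcal{O}_{L,p}$ generated by $f$ restricted to $L$, which up to a unit is exactly $F(\alpha, p+tq)$ viewed as an element of $\K[t]_{(t)}$. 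Hence
\begin{equation*}
\ord_p(V_\alpha,L) = \dim_\K \bigl(\mathcal{O}_{L,p}/\iota^*\mathcal{I}_V\bigr) = \dim_\K \bigl(\K[t]_{(t)}/(F(\alpha,p+tq))\bigr) = \ord_t\bigl(F(\alpha,p+tq)\bigr),
\end{equation*}
the last equality being the standard fact that the length of $\K[t]_{(t)}/(g)$ equals the order of vanishing of $g$ at $t=0$ (with the conventions that this is $+\infty$ when $g \equiv 0$, matching $L \subset V$, and $0$ when $g(0)\neq 0$, matching $p \notin V$).

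The one point requiring a little care — and the main (mild) obstacle — is the passage between the homogeneous polynomial $F$ and its dehomogenization: one must check that the homogeneous restriction $F(\alpha, sp+tq) \in \K[s,t]_d$ and the affine restriction $F(\alpha, p+tq) \in \K[t]$ have the same order of vanishing at the point $p$. This holds because $F(\alpha, sp+tq)$ is homogeneous of degree $d$ in $(s,t)$, so it factors as $s^{e}\cdot t^{m}\cdot(\text{unit at }[1:0])$ only through the $t$-adic part near $[1:0]$; setting $s=1$ extracts precisely the factor $t^m$, and $m = \ord_t(F(\alpha,p+tq))$. Equivalently, one notes $F(\alpha, sp+tq) = s^d F(\alpha, p + (t/s)q)$ and reads off the order at $t=0$ directly. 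Everything else is the routine identification of local rings of a line in $\P^n$, and the proof is complete.
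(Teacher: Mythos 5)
Your argument is correct: identifying $\mathcal{O}_{L,p}$ with $\K[t]_{(t)}$ via the parametrization $t\mapsto p+tq$ and observing that $\iota^*\mathcal{I}_{V_\alpha}$ is generated there by $F(\alpha,p+tq)$ up to the unit $(p_i+tq_i)^d$ yields exactly the stated equality; the paper itself gives no argument but defers to \cite{BDSW} (Lemma 3.2), which proceeds along these same lines, so your proof supplies the standard verification behind that citation. One cosmetic point: rescaling $p$ by $\lambda$ turns $F(\alpha,p+tq)$ into $\lambda^d F(\alpha,p+(t/\lambda)q)$, i.e.\ a nonzero constant times a linear reparametrization rather than a constant alone, but this still leaves $\ord_t$ at $t=0$ unchanged, so your conclusion stands.
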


\begin{proof} See \cite[Lemma 3.2]{BDSW}. 
\end{proof} 

This lemma motivates to introduce another set of variables $y=(y_0,\ldots,y_n)$ and to consider the polynomials $F_{\ell} \in \K[a,x,y]$, $\ell=0,\dots, d$,     
determined by the Taylor expansion
\begin{equation}
  \label{eq:taylor}
  F(a,x+ty)=\sum_{\ell=0}^{d}F_{\ell}(a,x,y) \frac{t^{\ell}}{\ell !}.
\end{equation}
We have the explicit formula
\begin{equation}\label{eq:explicit_Fell}
  F_\ell(a,x,y)=\sum_{0\le i_1,\ldots, i_\ell\le n} \frac{\partial^{\ell}F}{\partial
    x_{i_{1}}\cdots \partial x_{i_\ell}}(a,x) \, y_{i_1}\cdots y_{i_\ell} .
\end{equation}
In particular, $F_\ell$ is trihomogeneous of tridegree $(1,d-\ell,\ell)$ in $(a,x,y)$. 

\medskip

Let $H\subset \P^n$ be an hyperplane and let $U=\P^n\setminus H$. 
Given $p\in U$, a line passing at $p$ is uniquely determined by its intersection point $q=L\cap H$. Hence, the set $U\times H$ determines a chart of the incidence variety $\Phi\subset \P^n\times\G(1,n)$. Denote by $\Gamma_k(U)\subset \Gamma_k$ the corresponding open subset, projective bundle above $U\times H$. 

\begin{lemma}\label{lem:Sigmak_local_equations}
We have an isomorphism of schemes
\begin{equation*}
\Gamma_k(U)\simeq \{(\alpha,p,q)\in \P^N\times U\times H\, ,\, \, F_0(\alpha,p,q)=\cdots = F_{k-1}(\alpha,p,q)=0\}.
\end{equation*}
\end{lemma}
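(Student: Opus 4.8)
The plan is to combine the two preceding lemmas with the explicit coordinates on the chart $U\times H$. Fix the hyperplane $H = \{x_0 = 0\}$ (say), write $U = \P^n\setminus H$, and use affine coordinates on $U$ by setting $p = (1:p_1:\cdots:p_n)$ and on $H$ by $q = (0:q_1:\cdots:q_n)$. A point of $\Gamma_k(U)$ is a triple $(\alpha,p,L)$ with $p\in U$, $p\in L$ and $\ord_p(V_\alpha,L)\ge k$; since $p\in U$ and $L\ne$ a point, the line $L$ meets $H$ in a unique point $q = L\cap H$, and $(p,q)\mapsto L$ is the isomorphism $U\times H\xrightarrow{\sim}\Phi(U)$. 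This identifies the underlying set of $\Gamma_k(U)$ with $\{(\alpha,p,q)\in\P^N\times U\times H : \ord_p(V_\alpha,L_{p,q})\ge k\}$; the content of the lemma is that this identification is an isomorphism of \emph{schemes}, i.e.\ the scheme structure on $\Gamma_k(U)$ coincides with the one cut out by $F_0 = \cdots = F_{k-1} = 0$.

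First I would translate the contact-order condition into the vanishing of the $F_\ell$. By Lemma~\ref{lem:ordp}, $\ord_p(V_\alpha,L) = \ord_t\big(F(\alpha,\,p+tq)\big)$, and by the Taylor expansion \eqref{eq:taylor} we have $F(\alpha,p+tq) = \sum_{\ell=0}^d F_\ell(\alpha,p,q)\,t^\ell/\ell!$. Hence $\ord_t$ of this polynomial in $t$ is $\ge k$ precisely when $F_0(\alpha,p,q) = \cdots = F_{k-1}(\alpha,p,q) = 0$ (for $k\le d+1$; for $k > d+1$ one also needs $F_\ell = 0$ for $\ell\le d$, but the filtration \eqref{eq:stratification} shows $\Gamma_k = \Gamma_{d+1}$ there, and $F_\ell\equiv 0$ is forced once $F$ vanishes to order $d+1$, so the statement is unaffected — I would simply note $\min(k,d+1)$ governs the count, as in Proposition~\ref{prop:dimSigmak}). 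This gives the set-theoretic equality; the scheme-theoretic refinement is the point I would develop carefully next.

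For the scheme structure, I would argue as follows. The incidence variety $\Gamma_k\subset\P^N\times\P^n\times\G(1,n)$ is defined (in Proposition~\ref{prop:dimSigmak}) as the locus where the degree-$d$ form defining $V_\alpha$ lies in the kernel of the restriction map \eqref{eq:mPk} to $H^0(\mathcal O_L(d)/\mathfrak m_p^k)$; equivalently, $\Gamma_k$ is the zero scheme of the tautological section of a rank-$\min(k,d+1)$ bundle over $\P^N\times\Phi$ whose fibre at $(\alpha,p,L)$ is that quotient. On the chart $U\times H$ of $\Phi$, the pair $(p,q)$ with $p = (1:p_1:\cdots:p_n)$, $q = (0:q_1:\cdots:q_n)$ trivializes the line $L$ by the parametrization $t\mapsto p + tq$, and in this trivialization the composite $H^0(\mathcal O_{\P^n}(d))\to H^0(\mathcal O_L(d)/\mathfrak m_p^k)$ sends $F(\alpha,\cdot)$ to the truncated Taylor polynomial $\sum_{\ell=0}^{k-1} F_\ell(\alpha,p,q)\,t^\ell/\ell!$, whose coefficients in the basis $1,t,\dots,t^{k-1}$ are (up to the invertible scalars $1/\ell!$) exactly $F_0,\dots,F_{k-1}$. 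Therefore the ideal of $\Gamma_k(U)$ in $\mathcal O_{\P^N\times U\times H}$ is generated by $F_0(\alpha,p,q),\dots,F_{k-1}(\alpha,p,q)$, which is the claimed isomorphism. I would also remark that $F_\ell$, being trihomogeneous of tridegree $(1,d-\ell,\ell)$, restricts on this affine chart (where $x_0 = y_0 = 1$) to an honest polynomial in $(\alpha,p_1,\dots,p_n,q_1,\dots,q_n)$, so the right-hand side is a well-defined closed subscheme of $\P^N\times U\times H$.

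The main obstacle is not the set-theoretic equality — that is immediate from Lemmas~\ref{lem:ordp} and the Taylor expansion — but verifying that the \emph{scheme} structures agree, i.e.\ that no embedded or thickened components are introduced by passing from the intrinsic definition of $\Gamma_k$ via the kernel of \eqref{eq:mPk} to the explicit equations $F_0 = \cdots = F_{k-1} = 0$. The key mechanism making this work is that the trivialization $t\mapsto p+tq$ identifies $\mathcal O_L(d)/\mathfrak m_p^k$ with $\K[t]/(t^k)$ \emph{compatibly in families over} $U\times H$, so that the tautological section of the rank-$\min(k,d+1)$ bundle becomes, coordinate-wise, the tuple $(F_0,\dots,F_{k-1})$ (times units), and a zero scheme of a section is computed component-wise in any local trivialization. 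One small point to handle is the behaviour when $k > d+1$: there the bundle has rank $d+1$ and the section has components $F_0,\dots,F_d$, but $\Gamma_k$ coincides with $\Gamma_{d+1}$; I would phrase the statement, as above, so that the equations listed are $F_0,\dots,F_{k-1}$ with the understanding that $F_\ell$ is the zero polynomial for $\ell > d$, which is harmless.
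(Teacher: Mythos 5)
Your proposal is correct in substance, but it is organized differently from the paper's (very short) proof, and it leans on a reinterpretation that deserves one explicit sentence. The set-theoretic part is exactly the paper's: Lemma \ref{lem:ordp} plus the Taylor expansion \eqref{eq:taylor} give $\ord_p(V_\alpha,L_{p,q})=\ord_t\sum_\ell F_\ell(\alpha,p,q)t^\ell/\ell!$, hence the equality of supports. For the scheme structure, however, the paper does not define $\Gamma_k$ as the zero scheme of a tautological section (that bundle viewpoint only appears later, in Section \ref{sec4}); $\Gamma_k$ is defined as a set and carries the reduced structure of the variety exhibited in Proposition \ref{prop:dimSigmak}. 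So when you say the ideal of $\Gamma_k(U)$ ``is'' generated by $F_0,\dots,F_{k-1}$ because these are the components of the section in your trivialization, you are implicitly identifying the reduced structure with the zero-scheme structure, which is the very point at stake. The missing (one-line) justification is exactly what the paper's proof records: the subscheme $\{F_0=\cdots=F_{k-1}=0\}\subset\P^N\times U\times H$ is a $\P^{N-k}$-bundle over $U\times H$ (the $F_\ell$ are linear in $\alpha$ and the corresponding linear forms are independent, by surjectivity of \eqref{eq:mPk}), hence smooth and in particular reduced; a reduced closed subscheme with the same support as the variety $\Gamma_k(U)$ must coincide with it. Your trivialization argument is a perfectly good alternative derivation of the equations (and in fact previews the section $\tau_f$ of Theorem \ref{thm:principal_part}), but add the reducedness/projective-bundle remark to close the loop with the paper's definition of $\Gamma_k$; your convention $F_\ell\equiv 0$ for $\ell>d$ in the case $k>d+1$ is harmless and consistent with the $\min(k,d+1)$ count.
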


\begin{proof}
Follows from the previous lemma and the discussion above. This is an isomorphism at the level of schemes since the right hand side subscheme is reduced (use again that it is a projective bundle over $U\times H$). 
\end{proof}

Unfortunately, the polynomials $F_0,\ldots,F_{k-1}$ do not define $\Gamma_k$ globally since they all vanish on the diagonal $p=q$. This will lead to some difficulties to compute the degree of the hyperflex locus. But let us have a look to the properties one can get using the local equations of $\Gamma_k$. 

\section{Hyperflex locus of a general hypersurfaces.}\label{sec3}

In all that follows, we assume that $n+1\le k\le d+1$. By \eqref{eq:stratification}, these are the only interesting values of $k$ for our problem. 

\subsection{About the unicity of the flex line}

\begin{proposition}\label{prop:birational}
The projection $\Gamma_k\to \P^N\times \P^n$ has image 
$$
\Sigma_k=\{(\alpha,p)\in \P^N\times \P^n\, |\, p\in V_{\alpha,k}\}
$$
and the induced surjective morphism $\pi:\Gamma_k\to \Sigma_k$ is birational. 
\end{proposition}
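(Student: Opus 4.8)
The plan is to identify the image of the projection and then show that over a dense open subset of $\Sigma_k$ the fiber of $\pi$ is a single reduced point. For the first part, note that $(\alpha,p)$ lies in the image of $\Gamma_k\to\P^N\times\P^n$ precisely when there exists a line $L$ with $\ord_p(V_\alpha,L)\ge k$, which by Definition~\ref{def:1} means $\mu_p(V_\alpha)\ge k$, i.e. $p\in V_{\alpha,k}$; hence the image is exactly $\Sigma_k$ as a set. Since $\Gamma_k$ is projective (Proposition~\ref{prop:dimSigmak}), the image is closed, so $\pi:\Gamma_k\to\Sigma_k$ is a well-defined surjective morphism of projective varieties, and $\Sigma_k$ inherits irreducibility from $\Gamma_k$.

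For birationality, I would argue that the generic fiber of $\pi$ is a single point. The key is to produce one pair $(\alpha_0,p_0)\in\Sigma_k$ through which there is a \emph{unique} $k$-flex line: since $\Gamma_k$ is irreducible and $\pi$ is surjective, the fiber dimension is upper semicontinuous, so a single fiber of dimension $0$ forces the generic fiber to be finite, and then a degree argument (or a separate transversality/multiplicity computation) pins the generic fiber down to one reduced point. Concretely, I would work in the affine chart of Lemma~\ref{lem:Sigmak_local_equations}: fix $p_0\in U$ and a line $L_0$ through $p_0$ meeting $H$ at $q_0$, and choose the coefficients $\alpha_0$ of $f$ so that, after the Taylor substitution $f(p_0+tq_0)$, the polynomial has a zero of order exactly $k$ in $t$ along $L_0$ but along no other line through $p_0$. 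One can do this explicitly, e.g. normalizing coordinates so that $p_0=(1:0:\cdots:0)$, $q_0=(0:1:0:\cdots:0)$, and building $f$ from a generic perturbation of $x_0^{d-k}x_1^{k}$ plus terms that make the restriction to every other line through $p_0$ have contact order $<k$; verifying this is a finite, explicit computation with the equations $F_0=\cdots=F_{k-1}=0$ of Lemma~\ref{lem:Sigmak_local_equations}.

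Granting such a point, the fiber $\pi^{-1}(\alpha_0,p_0)$ is a single point of $\Gamma_k$, so by semicontinuity of fiber dimension the general fiber of $\pi$ is finite; since $\Gamma_k$ and $\Sigma_k$ are both irreducible of the same dimension $N+2n-\min(k,d+1)-1$ (this equality of dimensions should itself be recorded, and follows because $\dim\Sigma_k\le\dim\Gamma_k$ with equality forced by the generic fiber being finite, together with $\dim\Gamma_k$ from Proposition~\ref{prop:dimSigmak}), the map $\pi$ is generically finite of some degree $e\ge 1$. Finally, to get $e=1$ I would show the scheme-theoretic fiber at the special point $(\alpha_0,p_0)$ is reduced: this amounts to checking that the differential of the system $(F_0,\dots,F_{k-1})$ in the $q$-variables has maximal rank at $(\alpha_0,p_0,q_0)$, cutting out $L_0$ with multiplicity one. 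The main obstacle is precisely this explicit construction of the witness pair $(\alpha_0,p_0)$ together with the transversality check: one must exhibit a hypersurface for which a chosen point has a unique $k$-flex line, counted with multiplicity one, and verifying uniqueness means controlling the contact order of $f$ with \emph{all} lines through $p_0$ simultaneously — the rest of the argument is then formal via upper semicontinuity and the irreducibility already established in Proposition~\ref{prop:dimSigmak}.
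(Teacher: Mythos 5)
Your overall strategy is the same as the paper's: identify the image, get irreducibility of $\Sigma_k$ from that of $\Gamma_k$, and reduce birationality to exhibiting a single point $(\alpha_0,p_0)\in\Sigma_k$ whose fiber under $\pi$ is one reduced point, the rest being formal. But the part you defer --- the construction of the witness, which you yourself call ``the main obstacle'' --- is precisely the entire non-formal content of the proposition, and your sketch of it does not work as stated. A \emph{generic} perturbation of $x_0^{d-k}x_1^{k}$ destroys the flex condition at $p_0$ altogether: writing $F(a,p_0+tq)=\sum_{j=0}^{d}Q_j(a_j,y)\,t^{j}$ as in the paper, the unperturbed form has $Q_j\equiv 0$ for all $j<k$ (so \emph{every} line through $p_0$ is a $k$-flex line), while a generic perturbation makes $Q_0\neq 0$, so $p_0\notin V_{\alpha}$ and $(\alpha_0,p_0)\notin\Sigma_k$. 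What is actually needed is the observation that the fiber of $\pi$ over $(\alpha,p_0)$ is exactly the common zero locus in $\P^{n-1}$ of $Q_0,\dots,Q_{k-1}$, and that the coefficient vectors $a_0,\dots,a_{k-1}$ of these forms are \emph{independent} coordinates on $\P^N$; one then sets $a_0=0$ and chooses $k-1$ general forms of degrees $1,\dots,k-1$, which by B\'ezout and Bertini have a unique reduced common zero because $k-1\geq n$. This is where the hypothesis $n+1\le k$ enters, and your argument never uses it --- a warning sign, since for $k\le n$ the statement fails (the fiber over any point of $\Sigma_k$ is cut out by at most $n-1$ forms in $\P^{n-1}$, hence is positive-dimensional, or for $k=n$ generically consists of $(k-1)!>1$ points), so no amount of clever perturbation can produce your witness there. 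Controlling the contact order along \emph{all} lines through $p_0$ simultaneously, which you flag as the difficulty, is exactly what the decomposition into the $Q_j$ with free coefficients resolves; without it your proposal is an announcement of the result rather than a proof.

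A smaller remark: your route from ``one fiber is a single reduced point'' to ``generically one-to-one'' via semicontinuity plus a degree argument is fine and is essentially what the paper invokes; also note that the reducedness at the witness comes for free from the Bertini choice, so no separate Jacobian computation in the $q$-variables is needed once the witness is built correctly.
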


\begin{proof}
Clearly $\pi(\Gamma_k)=\Sigma_k$. Since $\Gamma_k$ is irreducible and $\pi$ is proper, $\Sigma_k$ is an irreducible variety. Hence, it's enough to find one point $(\alpha,p)\in \Sigma_k$ whose fiber is a single reduced point of $\Gamma_k$ to show that $\pi$ is generically one-to-one. 

Let us choose $H=\{x_0=0\}$ and $p_0=(1,0,\ldots,0)\in U=\P^n\setminus{H}$. We can write
\begin{equation}\label{eq:Qjs}
F(a,x)=\sum_{j=0}^d x_0^{d-j} Q_j(a_j,x_1,\ldots,x_n)
\end{equation}
where $Q_j$ is the general homogeneous polynomial of degree $j$ in $(x_1,\ldots,x_n)$ and $a_j$ is its coefficients vector. Notice that $a=(a_0,\ldots,a_{d})$. 
Let $q=(0,y_1,\ldots,y_n)\in H\simeq \P^{n-1}$ represent the lines passing through $p_0$. We get
\begin{equation}\label{eq:taylor_x0}
F(a,p_0+tq)=F(a,1,ty_1,\ldots,ty_n)=\sum_{j=0}^d Q_{j}(a_j,y_1,\ldots,y_n) t^j.
\end{equation}
It follows from Lemma \ref{lem:Sigmak_local_equations} and \eqref{eq:taylor_x0} that the fiber of $\Gamma_k\to \Sigma_k$ above $(\alpha,p_0)\in \P^N\times U$ is isomorphic (scheme-theoretically) to
$$
\Gamma_{k,(\alpha,p_0)}\simeq\{(y_1,\ldots,y_n)\in \P^{n-1},\,\, Q_0(\alpha_0,y)=\cdots=Q_{k-1}(\alpha_{k-1},y)=0\}.
$$
Recall that $k-1\le d$ so that $Q_i\ne 0$ for all $i=0,\ldots,k-1$. Hence, looking for a specialisation $a=\alpha$ for which the fiber over $(\alpha,p_0)$ is a reduced single point amounts to choose $a_0=0$ and to choose $k-1$ homogeneous polynomials in $\K[y_1,\ldots,y_n]$ of respective degrees $1,\ldots k-1$ which have a  a unique reduced common zero $q_0\in \P^{n-1}$. Since $k-1> n-1$, this is possible by the theorems of Bézout and Bertini.
\end{proof}

\begin{remark}\label{rem:contact_order}
If $k\le d$, then $Q_k$ is involved in the expansion \eqref{eq:taylor_x0} and we may choose $\alpha_k$ so that $Q_k(\alpha_k,q_0)\ne 0$. In such a case, the unique $k$-flex line of $V_\alpha$  at $p_0$ has contact order exactly $k$.
If $k=d+1$, then the unique flex line is contained in $V_\alpha$. 
\end{remark}

\subsection{About the non emptyness of the hyperflex locus.}

Recall that we assume that $n+1\le k\le d+1$. 

\begin{proposition}\label{prop:submersion}
The map  $\Gamma_k\to \P^N$ is surjective if and only if $k\le 2n-1$, except for $k = 2n-1 = d+1$.  
\end{proposition}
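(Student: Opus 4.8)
The plan is to use the dimension count from Proposition~\ref{prop:dimSigmak} together with the birationality of $\pi:\Gamma_k\to\Sigma_k$ from Proposition~\ref{prop:birational}, and then a generic smoothness/local-equations argument to decide surjectivity onto $\P^N$. First I would observe that $\Gamma_k\to\P^N$ is surjective if and only if $\Sigma_k\to\P^N$ is surjective, since the two varieties are birational and the map to $\P^N$ factors through $\pi$. Now $\Sigma_k$ is the incidence variety of pairs $(\alpha,p)$ with $p\in V_{\alpha,k}$, so surjectivity onto $\P^N$ is exactly the statement that the $k$-flex locus of \emph{every} hypersurface is non-empty; failure of surjectivity means the general hypersurface has empty $k$-flex locus. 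By Proposition~\ref{prop:dimSigmak} we have $\dim\Gamma_k=N+2n-\min(k,d+1)-1$, and since $k\le d+1$ this is $N+2n-k-1$. Hence $\dim\Sigma_k=N+2n-k-1$ as well. A necessary condition for $\Gamma_k\to\P^N$ to be dominant (hence surjective, by properness) is therefore $N+2n-k-1\ge N$, i.e.\ $k\le 2n-1$. This gives one implication immediately, and also pins down the expected fibre dimension $2n-k-1$ over a general $\alpha$ when $k\le 2n-1$.

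For the converse, assume $k\le 2n-1$ and I must exhibit, for a general $\alpha$ (equivalently, for at least one $\alpha$, together with the dimension count) a non-empty fibre of the right dimension; properness then upgrades dominance to surjectivity. Here I would reuse the explicit chart from the proof of Proposition~\ref{prop:birational}: taking $H=\{x_0=0\}$, $p_0=(1,0,\dots,0)$ and writing $F(a,x)=\sum_{j=0}^d x_0^{d-j}Q_j(a_j,x_1,\dots,x_n)$, the fibre of $\Gamma_k$ over $(\alpha,p_0)$ is
$$
\Gamma_{k,(\alpha,p_0)}\simeq\{(y_1,\dots,y_n)\in\P^{n-1}:\ Q_0(\alpha_0,y)=\cdots=Q_{k-1}(\alpha_{k-1},y)=0\}.
$$
Choosing $\alpha_0=0$ (so $Q_0$ imposes no condition) and $\alpha_1,\dots,\alpha_{k-1}$ generic, this is the common zero locus in $\P^{n-1}$ of $k-1$ generic forms of degrees $1,\dots,k-1$; when $k-1\le n-1$, i.e.\ $k\le n$, Bézout gives a zero-dimensional reduced scheme, but here $n+1\le k\le 2n-1$ so $1\le k-1-(n-1)=k-n\le n-1$, and by Bertini the intersection is a non-empty smooth variety of dimension $(n-1)-(k-1)=n-k$. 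So the fibre over $(\alpha,p_0)$ is non-empty, hence $p_0\in V_{\alpha,k}$, hence $\alpha\in\mathrm{image}(\Gamma_k\to\P^N)$. Since the image is closed (properness) and contains a Zariski-dense set of such $\alpha$ (the condition $\alpha_0=0$ and $\alpha_{\ge1}$ generic is not dense, so instead I would argue: the image is a closed subvariety of $\P^N$, and to see it is everything I combine non-emptiness of \emph{some} fibre with the fact that $\dim\Gamma_k=N+2n-k-1$ and $\Gamma_k$ irreducible, forcing the image to have dimension $N$ provided no component of $\Gamma_k$ maps to a proper subvariety — which cannot happen as $\Gamma_k$ is irreducible and has a fibre, so the generic fibre has dimension $2n-k-1\ge0$ and the image has dimension $N$).

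Finally I must handle the exceptional case $k=2n-1=d+1$, where the claim is that $\Gamma_k\to\P^N$ is \emph{not} surjective even though $k\le 2n-1$. In this borderline situation the dimension count gives $\dim\Gamma_k=N$, so the generic fibre would be finite and a general $\alpha$ is in the image only if the map is actually dominant, which is the subtle point. Here $k-1=d=2n-2$, and over $(\alpha,p_0)$ the fibre is the intersection of $Q_1,\dots,Q_{2n-2}$ — that is $2n-2=k-1$ forms of degrees $1,\dots,2n-2$ in $\P^{n-1}$; a generic such complete intersection of $n-1$ of them (we have $2n-2\ge n-1$ forms) is zero-dimensional of degree $(2n-2)!/(n-1)!$ but having \emph{more} than $n-1$ equations forces the common zero locus to be empty for generic coefficients. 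Thus over the point $p_0$ the fibre is generically empty; but one must rule out that the fibre is non-empty over some \emph{other} $p$. I expect the cleanest route is to show $\dim\Sigma_k<N$ directly in this case: indeed $\Sigma_k$ is irreducible of dimension $\dim\Gamma_k=N$ \emph{only if} $\pi$ is generically finite, but when $k=d+1$ the line through a flex point lies in $V$, and a dimension count on the space of $(\alpha,p,L)$ with $L\subset V_\alpha$ and $p\in L$ — which is what $\Gamma_{d+1}$ is — shows the projection to $\P^N$ drops dimension exactly when $d+1=2n-1$; concretely, the expected dimension of the Fano scheme of lines on a general degree $d$ hypersurface in $\P^n$ is $2n-3-d$, which is negative precisely when $d>2n-3$, i.e.\ $d\ge 2n-2$, i.e.\ $d+1\ge 2n-1$, so for $k=d+1=2n-1$ the general hypersurface contains no lines and $V_{\alpha,k}=\emptyset$. \textbf{The main obstacle} is making this last case fully rigorous: one needs to know that the general hypersurface of degree $2n-3$ in $\P^n$ contains only finitely many lines while degree $2n-2$ contains none, i.e.\ that the Fano scheme has its expected dimension for general $V$ — a standard but non-trivial transversality/Bertini argument on the universal line-incidence variety, which the paper presumably establishes via the same explicit-chart technique (choosing $\alpha_0=0$ and generic $\alpha_1,\dots,\alpha_d$, and counting: $d$ equations of degrees $1,\dots,d$ in $\P^{n-1}$ have common solutions for generic coefficients iff $d\le n-1$). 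Modulo that input, surjectivity of $\Gamma_k\to\P^N$ for $k\le 2n-2$ and its failure at $k=2n-1=d+1$ both follow.
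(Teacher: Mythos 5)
There is a genuine gap in the surjectivity direction, which is the heart of the proposition. First, your fibre computation is wrong as stated: over $(\alpha,p_0)$ you are intersecting $k-1$ forms of degrees $1,\dots,k-1$ in $\P^{n-1}$ with $k-1\ge n$, so for \emph{generic} coefficients the common zero locus is empty (the ``dimension $n-k$'' you invoke is negative); non-emptiness requires a special choice of coefficients, which is exactly what the proof of Proposition~\ref{prop:birational} arranges via B\'ezout and Bertini. Second, and more seriously, even granting one $\alpha$ in the image, your conclusion that the image has dimension $N$ does not follow from irreducibility of $\Gamma_k$ plus $\dim\Gamma_k\ge N$: the fibre-dimension theorem only bounds fibres from below, so the image could a priori be a proper subvariety with larger fibres, and your phrase ``the generic fibre has dimension $2n-k-1$'' assumes the conclusion. (You also conflate the fibre of $\Gamma_k\to\P^N\times\P^n$ over $(\alpha,p_0)$ with the fibre of $\Gamma_k\to\P^N$ over $\alpha$, which is the whole locus $\{\tau_\alpha=0\}\subset\Phi$ and is not controlled by looking at the single point $p_0$.) The missing idea is the paper's: exhibit an explicit triple $(V_0,p_0,L_0)\in\Gamma_k$ at which the projection $\Gamma_k\to\P^N$ is a \emph{submersion}, by writing local equations $f_0=\cdots=f_{k-1}=0$ for the fibre (Lemma~\ref{lem:Sigmak_local_equations}) and checking that the Jacobian at $(p_0,L_0)$ has maximal rank $k$ for the explicit hypersurface $f=\sum_j x_n^{2j-2}x_j$ (and its truncation when $k=2n-2=d+1$); this forces the generic fibre to have the expected dimension and hence the image to be all of $\P^N$. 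Note also that by the inclusions $\Gamma_{k+1}\subset\Gamma_k$ it suffices to treat the two border cases $k=2n-1\le d$ and $k=2n-2=d+1$.

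Your treatment of the exceptional case $k=2n-1=d+1$ is also incomplete, though more easily repaired. You reduce it to the statement that a general hypersurface of degree $2n-2$ contains no lines and flag that as an unproved input; in fact you do not need the full ``expected dimension of the Fano scheme'' statement, only an upper bound, which follows from a one-line dimension count on the incidence variety $\{(\alpha,L):L\subset V_\alpha\}$ (a $\P^{N-d-1}$-bundle over $\G(1,n)$, of dimension $N+2n-3-d<N$ when $d=2n-2$). The paper argues differently and more economically: if the map were surjective then, since $\dim\Gamma_k=N$, the generic fibre would be zero-dimensional, but $k=d+1$ forces $V_{\alpha,k}=V_{\alpha,\infty}$ to be ruled, so any non-empty fibre has dimension at least $1$, a contradiction. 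Either repair works, but as written both halves of your argument need substantive fixes.
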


%

\begin{proof} Let us separate the different cases.

\medskip
\noindent
\emph{Non surjective cases.} If $k\ge 2n$ 
then $\dim \Gamma_k< N$ by Proposition \ref{prop:dimSigmak} so the projection $\Gamma_k\to \P^N$ can not be onto. 
If $k=2n-1=d+1$ then $\dim \Gamma_k=N$. If the map is surjective, then the generic fiber has to be zero-dimensional. But $k=d+1$ implies that $V_k=V_\infty$ is ruled. Hence either $V_k$ is empty or $\dim V_k\ge 1$. In both cases, the generic fiber is not zero-dimensional. Hence $\Gamma_k\to \P^N$ is not onto.

\medskip
\noindent
\emph{Surjective cases.} Since we have inclusions $\Gamma_{k+1}\subset \Gamma_k$ for all $k$, we are reduced to show the surjectivity of $\Gamma_k\to \P^N$ for the cases $k=2n-1\le d$ and $k=2n-2=d+1$.

It's enough to prove local surjectivity since $\P^N$ is irreducible. Hence it is enough to look for a triplet $(V_0,p_0,L_0)\in \Gamma_k$ such that the projection $\Gamma_k\to \P^N$ is a submersion at $(V_0,p_0,L_0)$. 
Let us choose $p_0=(0,\ldots,0)$ in some local coordinates $x_1,\ldots,x_n$ of an affine chart $\A^n\subset \P^n$ and let us consider the line $L_0$ with affine equations $x_1=\cdots=x_{n-1}=0$. One can take local coordinates $(x,y)=(x_1,\ldots,x_n,y_1,\ldots,y_{n-1})$ for the incidence variety $\Phi=\{(p,L)\in \P^n\times\G(1,n)\}$ in a neighborhood of $(p_0,L_0)$, where  the line $L_y$ is parametrized by $$L_y\,:\, t\mapsto (x_1+t y_1,\ldots,x_{n-1}+t y_{n-1},x_n +t).$$

$\bullet$ Suppose that $k= 2n-1\le d$. Then there exists a degree $d$ hypersurface $V_0=V_{\alpha_0}\in \P^N$ with affine equation 
$$
f(x):=\sum_{j=1}^{n} x_n^{2j-2}x_{j}=x_1+ x_2 x_n^2 + x_3 x_n^4 +\cdots +x_{n-1}x_{n}^{2n-4} +x_n^{2n-1}.
$$ 
We get $\ord_{p_0}(L_0,V_0)=\ord_t(f(0,\ldots,0,t))=2n-1$. Thus $(\alpha_0,p_0,L_0)\in \Gamma_{2n-1}$. Let us consider the Taylor expansion
\begin{equation}\label{eq:taylor_exemple}
f(x+ty)=\sum_{i=0}^{2n-1} f_i(x,y)\frac{t^i}{i!}.
\end{equation} 
By Lemma \ref{lem:Sigmak_local_equations}, the fiber of $\Gamma_{2n-1}\to \P^N$ over $\alpha_0$ has (scheme theoretically) local equations
$$
\{f_0(x,y)=\cdots = f_{2n-2}(x,y)=0\}\subset \A^n\times \A^{n-1}
$$
in the neighborhood of $(p_0,L_0)$ and the map $\Gamma_k\to \P^N$ is a local submersion at $(\alpha_0,p_0,L_0)$ if and only if the jacobian matrix of this system of equations has maximal rank $k=2n-1$ at $(x,y)=(0,0)$.
To check this, it is actually enough to compute $f_i(x,y)$ modulo $(x,y)^2$ (square of the maximal ideal).
We find here that
\begin{eqnarray*}
f(x+ty)& =& \sum_{j=1}^{n-1} (x_n+t)^{2j-2}(x_{j}+t y_j)+(x_n+t)^{2n-1} \\
&\equiv & \sum_{j=1}^{n-1} ( t^{2j-2} x_j+ t^{2j-1} y_j) +(2n-1)t^{2n-2} x_n +t^{2n-1}\mod (x,y)^2.
\end{eqnarray*}
Combined with \eqref{eq:taylor_exemple}, we deduce that the differentials of the $f_i$'s at $(0,0)$ are given by (with $j=1\ldots,n-1$)
$$
df_{2j-2}(0,0) = (2j-2)! \,d x_{j} ,\quad df_{2j-1}(0,0)= (2j-1)!\, d y_{j},\quad df_{2n-2}=(2n-1)!\, dx_n.
$$
Up to reorder the variables as $(x_1,y_1,x_2,y_2,\ldots x_{n-1},y_{n-1},x_n)$ it follows that the underlying jacobian matrix 
at $(0,0)$ is square with shape 
$$
{\rm Jac}_{(0,0)}(f_0,\ldots,f_{2n-2})=\begin{pmatrix}
0! & 0 & \cdots & 0 & 0 \\ 
0 & \ddots & \ddots & \vdots & \vdots  \\ 
\vdots & \ddots & \ddots & 0 & \vdots  \\ 
0 & \cdots & 0 & (2n-3)! & 0 \\ 
0 & \cdots & \cdots & 0 & (2n-1)! 
\end{pmatrix},
$$
hence has maximal rank $2n-1$.

$\bullet$ The last case to consider is $k=2n-2$ and $k=d+1$. In such a case, we simply forget  the last monomial of $f$ and consider the truncated polynomial 
$$
\bar{f}(x):=\sum_{j=1}^{n-1} x_n^{2j-2}x_{j}=x_1+ x_2 x_n^2 + x_3 x_n^4 +\cdots +x_{n-1}x_{n}^{2n-4}.
$$
Then $\bar{f}$ has total degree $2n-3=d$ and gives thus the affine equation of a degree $d$ hypersurface $V_0=V_{\alpha_0}$. The fiber of $\Gamma_{2n-2}\to \P^N$ over $\alpha_0$ has (scheme theoretically) local equations
$$
\{f_0(x,y)=\cdots = f_{2n-3}(x,y)=0\}\subset \A^n\times \A^{n-1}
$$
with the same $f_i$'s as above. The underlying jacobian matrix is thus the submatrix
$$
{\rm Jac}_{(0,0)}(f_0,\ldots,f_{2n-3})=\begin{pmatrix}
0! & 0 & \cdots & 0 & 0 \\ 
0 & \ddots & \ddots & \vdots & \vdots  \\ 
\vdots & \ddots & \ddots & 0 & \vdots  \\ 
0 & \cdots & 0 & (2n-3)! & 0 
\end{pmatrix},
$$
which is a $(2n-2)\times(2n-1)$ matrix of maximal rank $2n-2=k$ as required. 
\end{proof}

\begin{remark} This example illutrates the importance to assume that the characteristic of $\K$ is zero or large enough. 
\end{remark}

\subsection{Hyperflex locus of a general hypersurfaces} 

We can now prove the main result of this section, generalizing \cite{BDSW, EisHar}.

\begin{theorem}\label{thm:generic}
Let $n+1\le k\le d+1$ and let $V$ be a \emph{general} hypersurface of degree $d$. Then :

$(1)$  $V_k\subset V$ is a subvariety of pure dimension $\dim V_k=2n-k-1$ (negative dimension is equivalent to empty) except if $k=d+1= 2n-1$ in which case $V_k$ is also empty.  

$(2)$ By a \emph{general} $p\in V_k$, there passes a unique $k$-flex line $L$. This line $L$ satisfies equality $\ord_p(V,L)=k$ if $k\le d$ or is contained in $V$ if $k=d+1$. 
\end{theorem}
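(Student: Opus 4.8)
The plan is to deduce Theorem~\ref{thm:generic} from the structural results already established, namely Propositions~\ref{prop:dimSigmak}, \ref{prop:birational} and \ref{prop:submersion}, via a generic-smoothness argument on the incidence correspondence $\Gamma_k$. The key players are the two projections
$$
\P^N \xleftarrow{\;\rho\;} \Gamma_k \xrightarrow{\;\pi\;} \Sigma_k \subset \P^N\times\P^n,
$$
where $\Sigma_k$ is the total space of the $k$-flex loci and $V_k = V_{\alpha,k}$ is, by definition, the fiber of the composite $\Gamma_k \to \P^N\times \P^n \to \P^N$ over $\alpha$, projected to $\P^n$. First I would treat the degenerate case $k = d+1 = 2n-1$: here Proposition~\ref{prop:submersion} already tells us $\rho$ is not surjective, and since $\dim\Gamma_k = N$ in this range, the image $\rho(\Gamma_k)$ is a proper closed subvariety of $\P^N$, so a general $V$ has $V_k = \emptyset$. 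This handles the exceptional clause of part~(1).

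Now assume we are in a surjective case, i.e.\ $n+1\le k\le 2n-1$ with $(k,d)\ne(2n-1,2n-1)$, so that by Proposition~\ref{prop:submersion} the morphism $\rho:\Gamma_k\to\P^N$ is dominant. Since $\operatorname{char}\K = 0$, generic smoothness applies: there is a dense open $W\subset\P^N$ over which $\rho$ is smooth; in particular for $\alpha\in W$ the fiber $\Gamma_{k,\alpha} := \rho^{-1}(\alpha)$ is smooth of pure dimension
$$
\dim\Gamma_{k,\alpha} = \dim\Gamma_k - N = (N + 2n - k - 1) - N = 2n-k-1,
$$
using Proposition~\ref{prop:dimSigmak} (here $\min(k,d+1)=k$ since $k\le d+1$, and in fact $k\le d$ or $k=d+1$). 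Next I would compare $\Gamma_{k,\alpha}$ with $V_{\alpha,k}$: the projection $\Gamma_{k,\alpha}\to\P^n$ has image exactly $V_{\alpha,k}$ by definition of the $k$-flex locus, and by Proposition~\ref{prop:birational} the map $\pi:\Gamma_k\to\Sigma_k$ is birational, hence over a general $\alpha$ the induced map $\Gamma_{k,\alpha}\to V_{\alpha,k}$ is birational as well (the locus where $\pi$ fails to be an isomorphism is a proper closed subset of $\Gamma_k$, so its image in $\P^N$ is contained in a proper closed subset, and we shrink $W$ to avoid it — more precisely we use that over general $\alpha$ the general fiber of $\pi$ meeting $\Gamma_{k,\alpha}$ is a single reduced point, which is the content of the explicit computation in the proof of Proposition~\ref{prop:birational}). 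Therefore $\dim V_{\alpha,k} = \dim\Gamma_{k,\alpha} = 2n-k-1$; purity of dimension follows because $\Gamma_{k,\alpha}$ is smooth, hence equidimensional on each connected component, and the fibers of a $\P^{N-k}$-bundle argument \emph{pushed down} keep dimension constant — alternatively one invokes that all components of $\Gamma_k$ dominate $\P^N$ since $\Gamma_k$ is irreducible (Proposition~\ref{prop:dimSigmak}), so every component of a general fiber has the expected dimension. This proves part~(1).

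For part~(2): uniqueness of the $k$-flex line through a general $p\in V_k$ is precisely the birationality of $\pi$ restricted to the general fiber, which we have just used — for general $\alpha$ and general $p\in V_{\alpha,k}$ the fiber $\pi^{-1}(\alpha,p)\subset\Gamma_k$ is a single (reduced) point, i.e.\ a unique line $L$ with $\ord_p(V_\alpha,L)\ge k$. The statement about the \emph{exact} contact order then follows from the openness of the condition $\ord_p(V_\alpha,L)\le k$ combined with Remark~\ref{rem:contact_order}: in the proof of Proposition~\ref{prop:birational} we exhibited, for $k\le d$, a specialization $\alpha$ and a point $p_0$ at which the unique $k$-flex line has contact order exactly $k$ (by choosing $Q_k(\alpha_k,q_0)\ne 0$); since $\{(\alpha,p,L)\in\Gamma_k : \ord_p(V_\alpha,L)\ge k+1\} = \Gamma_{k+1}$ is closed and, by Proposition~\ref{prop:dimSigmak}, has dimension strictly smaller than $\dim\Gamma_k$ when $k+1\le d+1$, a general point of $\Gamma_k$ lies outside $\Gamma_{k+1}$; projecting to $\P^N$ and using dominance shows that for general $\alpha$ and general $p\in V_{\alpha,k}$ the unique line $L$ has $\ord_p(V_\alpha,L)=k$. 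When $k=d+1$ there is no $\Gamma_{k+1}$ to worry about and $\ord_p = +\infty$ means $L\subset V_\alpha$ by B\'ezout, as noted after Definition~\ref{def:1}.

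\medskip

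\textbf{Main obstacle.} The delicate point is not any single step but the bookkeeping of \emph{which} non-empty open subset of $\P^N$ works simultaneously for all the required genericity conditions — smoothness of $\rho$, the fiberwise birationality of $\pi$, and the avoidance of $\Gamma_{k+1}$ — and, more substantively, establishing \emph{purity} of dimension of $V_k$ rather than merely a dimension bound. The cleanest route to purity is to note that $\Gamma_k$ is irreducible (so all components of a general fiber $\Gamma_{k,\alpha}$ have dimension exactly $\dim\Gamma_k - N$, with no spurious lower-dimensional components, by e.g.\ \cite[III.10.2]{EisHar}-type dimension theory or the theorem on the dimension of fibers), and then transport equidimensionality through the birational map $\Gamma_{k,\alpha}\dashrightarrow V_{\alpha,k}$ — one must check the birational map does not contract a component, which again follows from the explicit single-reduced-point fiber computation. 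I would organize the write-up so that all shrinkings of $W\subset\P^N$ are collected at the start, then carry out the dimension count and the two genericity claims in sequence.
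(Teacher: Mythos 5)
Your overall architecture is the paper's: everything is deduced from Propositions \ref{prop:dimSigmak}, \ref{prop:birational} and \ref{prop:submersion} by a dimension-of-fibers argument on the incidence correspondence. The difference is that the paper runs the count on $\Sigma_k\to\P^N$, whose fiber over $\alpha$ \emph{is} $V_{\alpha,k}$ (set-theoretically), so irreducibility of $\Sigma_k$ (of dimension $N+2n-k-1$, via birationality with $\Gamma_k$) plus the fiber-dimension theorem gives pure dimension at once; you instead run it on $\Gamma_k\to\P^N$ and then transport dimension, purity and uniqueness of the flex line across the fiberwise map $\Gamma_{k,\alpha}\to V_{\alpha,k}$.

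That transport step is where your write-up has a genuine flaw. You justify it by saying that the locus where $\pi$ fails to be an isomorphism has image in $\P^N$ contained in a proper closed subset, so one can shrink $W$ to avoid it. This is false in general: the exceptional locus $\Gamma'\subset\Gamma_k$ is a proper closed subset, but $\dim\Gamma_k-1=N+2n-k-2\ge N$ whenever $k\le 2n-2$, so $\Gamma'$ may perfectly well dominate $\P^N$ and cannot be dodged by a generic choice of $\alpha$. Nor does the single-reduced-point computation of Proposition \ref{prop:birational} by itself give fiberwise non-contraction: it only shows that $\pi$ is generically one-to-one on $\Gamma_k$, not that no component of the particular fiber $\Gamma_{k,\alpha}$ lies inside $\Gamma'$. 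The missing argument is the dimension count the paper performs (for exactly this purpose) in the proof of Corollary \ref{cor:sigmaF_reduced}: the general fiber $\Gamma_{k,\alpha}$ is pure of dimension $\dim\Gamma_k-N$, so if for $\alpha$ in a dense set some component of it were contained in $\Gamma'$, one would get $\dim\Gamma'\ge(\dim\Gamma_k-N)+N=\dim\Gamma_k$, a contradiction; this yields both the purity of $V_{\alpha,k}$ and the uniqueness of the $k$-flex line through a general $k$-flex point (the same count, applied to $\pi(\Gamma_{k+1})\subsetneq\Sigma_k$, is what legitimates your ``projecting to $\P^N$ and using dominance'' step for the exact contact order). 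With this replacement (or, more simply, by working with $\Sigma_k$ as the paper does) your proof closes up; two minor points: the range $2n\le k\le d+1$ (negative expected dimension, hence emptiness) should be disposed of by the same non-surjectivity argument you give for $k=d+1=2n-1$, and your use of the dimension drop $\dim\Gamma_{k+1}<\dim\Gamma_k$ in place of Remark \ref{rem:contact_order} is a legitimate (indeed slightly cleaner) variant.
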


\begin{proof}

(1) The $k$-flex locus of an hypersurface $V=V_\alpha$ is (set theoretically) the fiber over $\alpha$ of $\Sigma_k\to \P^N$, hence is a closed subvariety. It follows from Proposition \ref{prop:birational} and Proposition \ref{prop:dimSigmak} that $\Sigma_k$ is irreducible of dimension $\dim \Sigma_k=N+2n-k-1$ and Proposition \ref{prop:birational} ensures that $\Sigma_k\to \P^N$ is surjective if and only if $\Gamma_k\to \P^N$ is surjective, i.e. $k\le 2n-2$ or $k=2n-1 \le d$ thanks to Proposition \ref{prop:submersion}. For such a $k$, we deduce by upper semi-continuity of the fiber's dimension that 
$$\dim V_k\ge \dim \Sigma_k-N=2n-k-1
$$ 
for all $V\in \P^N$, and equality holds for a general $V$. Moreover, the fiber is generically reduced and pure dimensional. For other values of $k$, the map $\Sigma_k\to \P^N$ is not onto and it follows that $V_k$ is empty for a general $V$.

(2) Unicity of the $k$-flex line through a general $k$-flex point follows from Proposition \ref{prop:birational}. If $k=d+1$, any $k$-flex line is obviously contained in $V$ by Bézout's theorem. If $k\le d$, we saw in Remark \ref{rem:contact_order} that there exists a triplet $(V_0,p_0,L_0)$ for which the unique flex line at $p$ has contact order exactly $k$. It follows that $\Gamma_{k+1}\subset \Gamma_k$ is a proper Zariski closed subset. Combined with Proposition \ref{prop:birational}, this shows that $\Gamma_{k+1}$ can not dominate $\Sigma_k$. This precisely means that a general $k$-flex line has contact order exactly $k$ at a general $k$-flex point. 
\end{proof}

\section{Bundle of relative principal parts on $\Phi$}\label{sec4}

Following \cite[Chapter 11]{EisHar}, we will compute the degree of the hyperflex locus in terms of the degree of the top Chern class of some well chosen vector bundle over  the incidence variety
$$
\Phi=\{(p,L)\in \P^n\times \G(1,n),\, p\in L\}.
$$ 
Denote $\beta:\Phi\to \P^n$ the first projection. Let $Z:=\Phi \times_{\G(1,n)} \Phi$ be the fiber product of $\Phi$ with itself according to the proper smooth map $\Phi\to \G(1,n)$, with projections $\pi_1,\pi_2: Z \to \Phi$. Denote $\Delta\subset Z$ the diagonal subscheme and $\cI_\Delta$ its ideal sheaf. The vector bundle on $\Phi$ we are looking for is the so-called bundle of \emph{relative principal parts} defined as 
$$
\cE^{k-1}(d):=\cP^{k-1}_{\Phi/\G(1,n)}(\beta^*\cO_{\P^n}(d)):=\pi_{2*} \left(\pi_1^* (\beta^*\cO_{\P^n}(d))\otimes \cO_Z/\cI_{\Delta}^{k}\right).
$$
Namely, we have the following key result :

\begin{theorem}\label{thm:principal_part}\cite[Thm.11.2]{EisHar}
Let $k,d\ge 1$ be fixed integers.
\begin{enumerate}
\item The sheaf $\cE^{k-1}(d)$ is a rank $k$ vector bundle over $\Phi$ whose fiber at $(p,L)$ is the vector space
$$
\cE^{k-1}(d)_{p,L}= H^0(\cO_{L,p}(d)/\mf_{p}^{k}).
$$
\item We can associate to each $f\in H^0(\P^n,\mathcal{O}_{\P^n}(d))$ a section $\tau_f\in H^0(\Phi,\cE^{k-1}(d))$ whose value at $(p,L)$ is the natural restriction of $f$ to the fiber.
\end{enumerate}
\end{theorem}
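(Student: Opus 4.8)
The plan is to verify the two assertions essentially by unwinding the definition of $\cE^{k-1}(d) = \pi_{2*}\left(\pi_1^*\beta^*\cO_{\P^n}(d)\otimes \cO_Z/\cI_\Delta^k\right)$ and checking compatibility with base change. First I would set up the geometry carefully: $Z = \Phi\times_{\G(1,n)}\Phi$ is a $\P^1\times\P^1$-bundle over $\G(1,n)$, and for a point $(p,L)\in \Phi$ with image $L\in \G(1,n)$, the fiber $\pi_2^{-1}(p,L)$ is canonically identified with the line $L$ itself (the second copy of $\Phi$ over $L$), with the diagonal $\Delta$ cutting out the point $p\in L$. Thus $\cO_Z/\cI_\Delta^k$ restricted to this fiber is $\cO_L/\mf_p^k$, a length-$k$ Artinian sheaf supported at $p$, and $\pi_1^*\beta^*\cO_{\P^n}(d)$ restricts to $\cO_L(d)$. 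So the naive fiber of the pushforward at $(p,L)$ is $H^0(\cO_L(d)/\mf_p^k)$, which has dimension $k$ (it is $\min(k, d+1)$ a priori, but here we keep all $k$ jets since we do not restrict to an honest $\P^1 = L$; the quotient $\cO_L(d)/\mf_p^k$ of the free rank-one sheaf $\cO_L(d)$ by the $k$-th power of a point ideal always has length $k$).

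Second, I would justify that $\pi_{2*}$ of this sheaf is locally free of rank $k$ with the expected fibers. The key input is cohomology-and-base-change (Grauert's theorem): since $\cG := \pi_1^*\beta^*\cO_{\P^n}(d)\otimes\cO_Z/\cI_\Delta^k$ is flat over $\Phi$ (it is a line bundle twisted by $\cO_Z/\cI_\Delta^k$, and $\Delta\hookrightarrow Z$ is a section of the smooth morphism $\pi_2$, so $\cO_Z/\cI_\Delta^k$ is flat over $\Phi$ of relative length $k$), and since $\cG$ has zero-dimensional support on each fiber of $\pi_2$, we have $H^1$ of each fiber vanishing and the function $(p,L)\mapsto \dim H^0(\text{fiber})$ is constant equal to $k$. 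Hence $\pi_{2*}\cG$ is locally free of rank $k$ and its formation commutes with base change, giving $\cE^{k-1}(d)_{p,L} = H^0(\cO_L(d)/\mf_p^k) = H^0(\cO_{L,p}(d)/\mf_p^k)$ as claimed in (1). For a cleaner self-contained argument one can work in the chart $U\times H$ from Lemma~\ref{lem:Sigmak_local_equations}, where $\Phi$ is trivialized and $\cO_Z/\cI_\Delta^k$ is explicitly the structure sheaf of a relative fat point, and exhibit $\pi_{2*}\cG$ as the bundle with fiber basis the jets $1, s, \dots, s^{k-1}$ where $s$ is a local parameter on $L$ at $p$; the transition functions are then polynomial in the coordinates and visibly give a rank-$k$ bundle.

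Third, for assertion (2), the section $\tau_f$ is produced functorially: a global section $f\in H^0(\P^n,\cO_{\P^n}(d))$ pulls back to $\beta^*f\in H^0(\Phi,\beta^*\cO_{\P^n}(d))$, which pulls back again via $\pi_1$ to a section of $\pi_1^*\beta^*\cO_{\P^n}(d)$ over $Z$; composing with the surjection $\pi_1^*\beta^*\cO_{\P^n}(d)\twoheadrightarrow \cG$ and pushing forward by $\pi_{2*}$ yields $\tau_f\in H^0(\Phi, \cE^{k-1}(d))$. By the base-change compatibility established above, evaluating $\tau_f$ at $(p,L)$ amounts to restricting $f$ to the fiber $\cO_L(d)/\mf_p^k$, i.e. taking the $(k-1)$-jet at $p$ of $f|_L$ — precisely the natural restriction map, which is also the map appearing in \eqref{eq:mPk}. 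In particular $\tau_f(p,L) = 0$ iff $\ord_p(V_\alpha, L)\ge k$, linking the zero locus of $\tau_f$ to $\Gamma_k$ (over the fixed hypersurface) and hence, via the top Chern class of $\cE^{k-1}(d)$, to the degree of the $k$-flex locus.

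The main obstacle is the base-change/local-freeness step: one must check cleanly that $\cO_Z/\cI_\Delta^k$ is $\pi_2$-flat (which hinges on $\Delta$ being a relative Cartier-type subscheme, i.e. the image of a section of the smooth morphism $\pi_2$, so that $\cI_\Delta$ is locally generated by a single non-zero-divisor on each fiber), and that consequently the higher direct images vanish and $\pi_{2*}$ commutes with base change. Everything else — identifying the fibers of $Z\to\G(1,n)$ with $L\times L$, identifying $\Delta$ with the point $p$, and the functorial construction of $\tau_f$ — is formal once this flatness is in hand. Since this is essentially \cite[Theorem 11.2]{EisHar}, I would present the flatness and base-change points in the explicit chart $U\times H$ where they become a transparent computation with power-series truncations, and refer to \emph{loc. cit.} for the coordinate-free formulation.
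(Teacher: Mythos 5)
Your proposal is correct, but it proves the key point by a different route than the paper. Where you invoke general machinery — flatness of $\cO_Z/\cI_\Delta^k$ over $\Phi$ (because $\Delta$ is the image of a section of the smooth $\P^1$-bundle $\pi_2$, hence a relative effective Cartier divisor) followed by Grauert/cohomology-and-base-change to get local freeness of rank $k$ and the identification of fibers — the paper instead gives a self-contained local computation: it trivializes $\Phi\to\G(1,n)$ over affine opens so that $V=\Spec A[t]$, writes $V\times_U V=\Spec A[t_1,t_2]$ with diagonal ideal $(t_1-t_2)$, and exhibits $\cE(V)\simeq A[t_1,t_2]/(t_1-t_2)^k$ as a free $A[t_2]$-module with basis $1,(t_1-t_2),\dots,(t_1-t_2)^{k-1}$, then computes the fiber at a rational point by hand and defines $\tau_f$ as the image of $\beta^*f\otimes 1$ in this quotient — essentially the ``explicit chart'' fallback you mention at the end. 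Your abstract argument is sound (the base is smooth, $\pi_2$ is proper, the fibers of $\cG$ are skyscrapers of length $k$, so $h^0$ is constant and $H^1$ vanishes fiberwise), and it generalizes painlessly; in fact you could streamline it further by noting that the support $k\Delta$ is \emph{finite flat of degree $k$} over $\Phi$ and $\cG|_{k\Delta}$ is a line bundle on $k\Delta$, so $\pi_{2*}\cG=(\pi_2|_{k\Delta})_*(\cG|_{k\Delta})$ is locally free of rank $k$ and commutes with arbitrary base change for the trivial reason that the morphism is affine, with no appeal to Grauert. What the paper's elementary computation buys in exchange is an explicit local frame for $\cE^{k-1}(d)$ (used implicitly later, e.g.\ for the filtration behind Proposition \ref{prop:exact_sequence}) and independence from base-change theorems; what yours buys is brevity and a cleaner conceptual reason why the rank is $k$ rather than $\min(k,d+1)$ (a point you correctly flag). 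One immaterial slip: the fiber $\pi_2^{-1}(p,L)$ is the fiber of the \emph{first} factor of $Z=\Phi\times_{\G(1,n)}\Phi$ over $L$, not the second, but this is pure labeling and affects nothing.
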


\begin{proof}\emph{(sketch, inspired from \cite[Thm.7.2]{EisHar})}.
The sheaf $\cE:=\cE^{k-1}(d)$ is a quasi-coherent sheaf on $\Phi$, and we want to show that it is locally free of rank $k$. Denote $\mathcal{M}:=\beta^*\mathcal{O}_{\P^n}(d)$ and $\mathcal{F}=\pi_1^*\mathcal{M}\otimes \cO_Z/\cI_{\Delta}^{k}$.
Denote $\pi:\Phi\to \G(1,n)$ the natural projection. Let $V\subset \Phi$ be an affine open subvariety with image $U=\pi(V)\subset  \G(1,n)$.  The sheaf $\mathcal{F}$ being supported on the diagonal $\Delta$, we have an isomorphism of $\mathcal{O}(V)$-modules, 
\begin{equation}\label{eq:E(V)}
\cE(V):=\pi_{2*}\mathcal{F}(V)=\mathcal{F}(\pi_2^{-1}(V))\simeq \mathcal{F}(V\times_U V), 
\end{equation}
and we may thus assume that both maps $\pi_1$ and $\pi_2$ are the projections from $V\times_U V$ to $V$. The manifold $\Phi$ being a $\P^1$-bundle over the Grassmannian $\G(1,n)$, we may suppose moreover that $U=\Spec A$ is affine and $V=\Spec(R)$, with $R=A[t]$. In particular, we may suppose $\pi_1$ and $\pi_2$ affine. In such a setting, we have $V\times_U V=\Spec S$ where 
\begin{equation}\label{eq:iso}
S=R\otimes_{A} R= A[t]\otimes_{A} A[t]= A[t_1,t_2],
\end{equation}
where we denote $t_1=t\otimes 1$ and $t_2=1\otimes t$.  
The line bundle $\mathcal{M}$ corresponds to the free rank one $R$-module $M:=\mathcal{M}(V)$ and its pull-back $\pi_1^*(\mathcal{M})$ corresponds to the free rank one $S$-module $M\otimes_A  R$. The $A$-subscheme $\Delta\cap (V\times_U V)$ is defined by the ideal $I\subset S$ which is the kernel of the multiplication map $R\otimes_A R\to R$, that is, $I$ is the principal ideal generated by $t\otimes_A 1-1\otimes_A t=(t_1-t_2)$. Combined with \eqref{eq:E(V)}, we get
$$
\cE(V)=\frac{M\otimes_A  R}{I^k (M\otimes_A  R)},
$$
considered as an $R$-module under the action  $r\cdot(m\otimes_A e):=m\otimes_A re$ (by definition of the direct image $\pi_2^*$). Since $M\otimes_A  R\simeq S$ is a free $S$-module of rank one, we get with \eqref{eq:iso} an isomorphism of $A[t_2]$-modules
\begin{equation}\label{eq:iso2}
\cE(V)\simeq \frac{A[t_1,t_2]}{(t_1-t_2)^k}=A[t_2]\oplus \cdots \oplus (t_1-t_2)^{k-1} A[t_2].
\end{equation}
This shows that $\cE(V)$ is a free $R$-module of rank $k$. This construction is compatible with localization and gluing, hence the sheaf $\cE$ is a rank $k$ vector bundle over $\Phi$, as asserted. 

Let us have a closer look at its fiber. 
We write $\ell\in \G(1,n)$ the point corresponding to the line $L\subset\P^n$. 
A $\K$-rational point $(p,\ell)\in V$ is given by a maximal ideal $\mathfrak{m}=\ker(\psi:R\to \K)$, with residue field $\kappa(p,\ell)$. The projection $\pi:V\to U$ is induced by the natural inclusion $A \subset R=A[t]$ and $\mathfrak{p}=\mathfrak{m}\cap A$ is the maximal ideal of $A$ corresponding to the residue field $\kappa(\ell)$ of $\ell=\pi(p,\ell)\in U$.  Since $\K$ is assumed to be algebraically closed, we have a canonical identification of residue fields
$$
\kappa(p,\ell)=R/\mathfrak{m}= A/\mathfrak{p}=\kappa(\ell)=\K
$$
leading to a canonical identification of $\K$-vector space
$$
S/\mathfrak{m}=S\otimes_R R/\mathfrak{m}=(R\otimes_A R)\otimes_R R/\mathfrak{m}= R\otimes_A R/\mathfrak{m} = R\otimes_A A/\mathfrak{p}=R/\mathfrak{p}.
$$
Thus, the fiber of $\pi_2:V\times_U V\to V$ over $(p,\ell)$ can be canonically identified with the fiber of $F_\ell:=\pi^{-1}(\ell)$ of $\pi:V\to U $ over $\ell$.
Denote $\mathfrak{m}_{p,L}\subset R/\mathfrak{p}$ the image of $I/\mathfrak{m}\subset R/\mathfrak{m}$ under this identification. Then $\mathfrak{m}_{p,\ell}$ is the maximal ideal of $(p,\ell)\in F_\ell$. Denoting $a\in \K$ the residue class of $t$ in $R/\mathfrak{m}=A/\mathfrak{p}$, the class modulo $\mathfrak{m}$ of the generator $t\otimes_A 1-1\otimes_A t$ of $I$ is identified with
$$\overline{t\otimes_A 1-1\otimes_A t}:=t\otimes_A \bar{1}-1\otimes_A \bar{a}=(t-a)\otimes_A \bar{1}=t_1-a
$$
in the ring $R/\mathfrak{p}$. Under the isomorphism \eqref{eq:iso2}, we simply get
$$
\cE_{(p,\ell)}\simeq \frac{A/\mathfrak{p}[t_1]}{(t_1-a)^k}\simeq \frac{\K[t_1]}{(t_1-a)^k}.
$$
In terms of sheaves, we get an isomorphism of $\kappa(p,\ell)$-vector spaces
\begin{equation}\label{eq:iso sur phi}
\cE_{(p,\ell)}=\mathcal{M}\otimes_{\mathcal{O}_{\G(1,n),\ell}} \mathcal{O}_{F_\ell,(p,\ell)}/\mathfrak{m}_{p,\ell}^k.
\end{equation}
The restriction to the open set $V$ of the map $\beta:\Phi\to \P^n$ maps the fiber $F_\ell$ isomorphically to the (affine) line $L_0:=L\cap \beta(V)\subset \P^n$
defined by $\ell$ and $\mathfrak{m}_{p,\ell}$ maps through $\beta$ to the maximal ideal $\mathfrak{m}_{p}$ of $p\in L_0$. Reminding that $\mathcal{M}=\beta^*(\mathcal{O}_{\P^n}(d))$, pushing forward through $\beta_*$ the right hand term of \eqref{eq:iso sur phi} leads to the canonical identification of $\kappa(p,L)$-vector space
$$
\cE_{(p,L)}\simeq \mathcal{O}_{\P^n}(d)\otimes_{\mathcal{O}_{\P^n,p}} \mathcal{O}_{L_0,p}/\mathfrak{m}_{p}^k = \mathcal{O}_{L_0,p}(d)/\mathfrak{m}_{p}^k,
$$
as required. Given a section $f\in H^0(\beta(V),\mathcal{O}_{\P^n}(d))$,  the image of $\beta^* f\otimes_A 1$ in $M\otimes_A  R/I^k (M\otimes_A  R)$ will give an element whose value at $(p,\ell)$ will coincides with the image of $f$  in $\mathcal{O}_{L_0,p}(d)/\mathfrak{m}_{p}^k$. These sections will naturally glue together when considering a covering of $\Phi$ by various open sets $V$. 


\end{proof}

For $\alpha\in \P^N$, let $f_\alpha\in H^0(\P^n,\mathcal{O}_{\P^n}(d))\simeq \mathbb{A}^{N+1}$ be an arbitrary homogeneous polynomial defining $V_\alpha$ and denote for short $\tau_\alpha=\tau_{f_\alpha}$, with notations of point (2) in Theorem \ref{thm:principal_part}.

\begin{corollary}\label{cor:sigmaF_reduced} \ 
\begin{enumerate}
\item We have $\Gamma_{k}=\{(\alpha,p,L)\in \P^N\times \Phi,\,\, \tau_{\alpha}(p,L)=0\}$.
\item For a general $\alpha$, the subscheme $\{\tau_{\alpha}=0\}\subset \Phi$ is reduced and maps birationally to the $k$-flex locus $V_{\alpha,k}$  of $V_\alpha$ via the projection $\Phi\to \P^n$.
\end{enumerate}
\end{corollary}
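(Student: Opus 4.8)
The plan is to deduce Corollary \ref{cor:sigmaF_reduced} almost formally from Theorem \ref{thm:principal_part}, Proposition \ref{prop:dimSigmak}, Proposition \ref{prop:birational} and Theorem \ref{thm:generic}.

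\medskip
\noindent
\emph{Part (1).} By point (2) of Theorem \ref{thm:principal_part}, the value $\tau_\alpha(p,L)$ is the image of $f_\alpha$ under the restriction map
$$H^0(\P^n,\mathcal{O}_{\P^n}(d))\longrightarrow \cE^{k-1}(d)_{(p,L)}= H^0(\cO_{L,p}(d)/\mf_p^k).$$
By point (1) of Theorem \ref{thm:principal_part} together with \eqref{eq:iso sur phi}, this restriction map coincides with the map \eqref{eq:mPk} appearing in the proof of Proposition \ref{prop:dimSigmak}. Since $\ord_p(V_\alpha,L)\ge k$ is by definition equivalent to the vanishing of that restriction (Proposition \ref{prop:dimSigmak}), we get $\tau_\alpha(p,L)=0$ if and only if $(\alpha,p,L)\in \Gamma_k$. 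The independence of the choice of homogeneous polynomial $f_\alpha$ representing $V_\alpha$ is clear since rescaling $f_\alpha$ rescales $\tau_\alpha$.

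\medskip
\noindent
\emph{Part (2).} First I would record that for a fixed general $\alpha$, the zero scheme $\{\tau_\alpha=0\}\subset\Phi$ is set-theoretically the fiber of the projection $\Gamma_k\to\P^N$ over $\alpha$, which by part (1) and Proposition \ref{prop:birational} maps onto $V_{\alpha,k}$ through $\Phi\to\P^n$; since $\pi\colon\Gamma_k\to\Sigma_k$ is birational by Proposition \ref{prop:birational}, this fiber map is birational onto $V_{\alpha,k}$ for $\alpha$ in a dense open subset of $\P^N$ (the complement of the locus where the birational morphism fails to be an isomorphism, together with its image, is a proper closed subset, so a general fiber avoids it). It remains to prove reducedness of $\{\tau_\alpha=0\}$ for general $\alpha$. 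The clean way is a generic smoothness / transversality argument: consider the "universal" evaluation, i.e. the incidence variety $\Gamma_k\subset\P^N\times\Phi$, which by Proposition \ref{prop:dimSigmak} is smooth (indeed a projective bundle over $\Phi$) of dimension $N+2n-k-1$. The projection $q\colon\Gamma_k\to\P^N$ is a dominant morphism of smooth varieties (dominance by Theorem \ref{thm:generic}(1), i.e. Proposition \ref{prop:submersion}, in the relevant range $n+1\le k\le 2n-2$ or $k=2n-1\le d$), so by generic smoothness in characteristic zero there is a dense open $U\subset\P^N$ over which $q$ is smooth; for $\alpha\in U$ the fiber $q^{-1}(\alpha)$ is a smooth, hence reduced, scheme. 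Finally I would identify this fiber scheme-theoretically with $\{\tau_\alpha=0\}$: both are closed subschemes of $\Phi$ cut out, in the local charts of Lemma \ref{lem:Sigmak_local_equations}, by the same equations $F_0(\alpha,\cdot)=\cdots=F_{k-1}(\alpha,\cdot)=0$ (this is exactly how $\Gamma_k$ was described locally, and $\tau_\alpha$ realizes the same restriction globally via Theorem \ref{thm:principal_part}), so the scheme structures agree. Hence $\{\tau_\alpha=0\}$ is reduced for general $\alpha$, and maps birationally onto $V_{\alpha,k}$, as claimed.

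\medskip
\noindent
\emph{Main obstacle.} The routine-looking but genuinely load-bearing point is the scheme-theoretic identification in part (2): one must be careful that the zero scheme $\{\tau_\alpha=0\}$ of the section of the vector bundle $\cE^{k-1}(d)$ really coincides, with its natural scheme structure, with the fiber of $\Gamma_k\to\P^N$, and not merely set-theoretically. This is where one invokes both the explicit local description of $\Gamma_k$ (Lemma \ref{lem:Sigmak_local_equations}) and the compatibility of $\tau_f$ with restriction built into the proof of Theorem \ref{thm:principal_part}; once that matching of local equations is in hand, smoothness of $q$ over a general $\alpha$ transfers reducedness to the fiber with no further work.
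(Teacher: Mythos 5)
Your overall route is the same as the paper's: part (1) is the identical identification via Theorem \ref{thm:principal_part}, and in part (2) the paper likewise identifies the scheme-theoretic fiber of $p_1:\Gamma_k\to\P^N$ over a general $\alpha$ with $\{\tau_\alpha=0\}$, gets its reducedness from smoothness of $\Gamma_k$ in characteristic zero, and transports the birationality of $\pi:\Gamma_k\to\Sigma_k$ (Proposition \ref{prop:birational}) to the general fibers. Your generic-smoothness argument and the matching of local equations via Lemma \ref{lem:Sigmak_local_equations} are fine and are exactly the load-bearing points you flag.

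The genuine gap is in the parenthetical justification of fiberwise birationality. You claim that the locus $\Gamma'\subset\Gamma_k$ where $\pi$ fails to be an isomorphism, ``together with its image, is a proper closed subset, so a general fiber avoids it.'' This fails whenever the fibers of $p_1$ are positive dimensional, i.e. for every $k\le 2n-2$: $\Gamma'$ is indeed a proper closed subset of $\Gamma_k$, but it may have dimension as large as $\dim\Gamma_k-1=N+2n-k-2\ge N$, hence may dominate $\P^N$, and then its image in $\P^N$ is \emph{not} contained in a proper closed subset and the fiber $p_1^{-1}(\alpha)$ over a general $\alpha$ genuinely meets $\Gamma'$. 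What is both true and sufficient is the weaker statement that no irreducible \emph{component} of the general fiber is contained in $\Gamma'$, so that each component of $\{\tau_\alpha=0\}$ meets the open set where $\pi$ is an isomorphism. This is exactly the point the paper settles by a dimension count: the general fibers of $p_1$ are pure of dimension $2n-k-1$, so if some component of the general fiber were contained in $\Gamma'$ for all $\alpha$ in a nonempty open set of $\P^N$, one would get $\dim\Gamma'\ge (2n-k-1)+N=\dim\Gamma_k$, contradicting $\Gamma'\subsetneq\Gamma_k$. With this replacement for your ``avoids'' claim, the rest of your argument goes through and coincides with the paper's proof.
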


\begin{proof} 
Given $(p,L)\in \Phi$, we have $(\alpha,p,L)\in \Gamma_k$ if and only if $f_\alpha$ lies in the kernel of the restriction map
$$
H^0(\P^n,\cO_{\P^n}(d))\to H^0(\cO_{L,p}(d)/\mf_{p}^{k})
$$
that is if and only if $\tau_{\alpha}(p,L)=0$ by Theorem \ref{thm:principal_part}. For the second point, consider the commutative diagram
\begin{equation*}
  \begin{tikzpicture}[scale=0.9]
            \node (A) at (0,0) [inner sep=4pt] {$\Gamma_k$};
            \node (B) at (3,0) [inner sep=4pt] {$\Sigma_k$};
            \node (C) at (1.5,-1.5) [inner sep=4pt] {$\mathbb P^N$};
            \draw[->] (A) --node[above]{\tiny $\pi$} (B);
            \draw[->] (A) --node[left]{\tiny $p_1$} (C);
            \draw[->] (B) --node[right]{\tiny $p_2$} (C);
        \end{tikzpicture}
         \end{equation*}
induced by the natural projection. The fibers of $p_1$ and $p_2$ above a general $\alpha$ are reduced and respectively isomorphic to 
$$
p_1^{-1}(\alpha)\simeq \{\tau_{\alpha}=0\}\quad {\rm and}\quad p_2^{-1}(\alpha)\simeq V_{\alpha,k}. 
$$
On the other hand, since the morphism $\pi$ is birational by Proposition \ref{prop:birational}, the general fibers $p_1^{-1}(\alpha)$ and $p_2^{-1}(\alpha)$ are birational too (meaning that there is a bijection between their irreducible components, the components being mapped birationaly to each other). Indeed, there is a non empty open set $U\subset \Gamma_k$ such that $\pi_{|U}$ is an isomorphism onto its image. Since the fibers of $p_1$ are generically equidimensional, it's enough to show that the general fiber of $p_1$ has no component in the closed subvariety $\Gamma'=\Gamma_k\setminus U\subset \Gamma_k$. If this is not the case, we would have $\dim \Gamma'= \dim p_1^{-1}(\alpha)+N$ for $\alpha$ in an open set of $\P^N$, which would lead to $\dim \Gamma'=\dim \Gamma_k$, a contradiction. 
\end{proof}

\section{Degree of the hyperflex locus}\label{sec5}

We briefly recall the notion of Chern classes of a vector bundle on a smooth variety $X$ and some useful properties that we will use in order to compute the  degree of the hyperflex locus. We refer the reader to \cite{EisHar, Ful, hart} for classical references on intersection theory. 

We denote by $A^i(X)$ the Chow group of cycles of $X$ of codimension $i$ modulo rational equivalence. The Chow ring of $X$ is $$A(X)=A^0(X)\oplus \cdots  \oplus A^{\dim(X)},$$ the multiplication being given by the intersection product. In particular, if two irreducible subvarieties $Y,Z\subset X$ of respective codimension $i$ and $j$ intersect each other transversally, the product is $[Y]\cdot [Z]:=[Y\cap Z]\in A^{i+j}(X)$. 

\begin{theorem}\label{thmChernClasses}\cite[Theorem 5.3]{EisHar}, \cite[Appendix A, Section 3]{hart}
There is a unique way to assign to a vector bundle $\mathcal E$ of rank $r$ on $X$ a class $c(\mathcal E)=1+c_1(\mathcal E)+\cdots +c_t(\mathcal E) \in A(X)$ such that 
\begin{enumerate}
\item if $\mathcal L$ is a line bundle on $X$, then $c(\mathcal L)=1+c_1(\mathcal L)$, where $c_1(\mathcal L)\in A^1(X)$ is the class of the divisor of zeros minus the divisor of the poles of any rational section of $\mathcal L$;
\item \label{itthmCC:2}if $\tau_0,\dots,\tau_{r-i}$ are global sections of $\mathcal E$, and if the degeneracy  locus $D$ where these sections are linearly dependent has codimension $i$, then $c_i(\mathcal E)=[D]\in A^i(X)$;
\item \label{itthmCC:3}(Whitney's formula)  if
\[
0\rightarrow \mathcal E\rightarrow \mathcal F\rightarrow \mathcal G\rightarrow 0,
\]
is a short exact sequence of vector bundles on $X$, 
then $c(\mathcal F)=c(\mathcal E)\cdot c(\mathcal G)$;
\item if $\varphi: Y\rightarrow X$ is a morphism of smooth varieties, then $\varphi^\ast(c(\mathcal E))=c(\varphi^\ast(\mathcal E))$; 
\item (Splitting principle) any identity among Chern classes of bundles that
is true for bundles that are direct sums of line bundles is true in general.
\end{enumerate}
\end{theorem}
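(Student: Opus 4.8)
The statement is Grothendieck's axiomatic characterization of Chern classes, and the natural plan is to \emph{construct} the total class $c(\mathcal E)$ explicitly from the projective bundle of $\mathcal E$, verify the five listed properties one by one, and deduce uniqueness at the end via the splitting principle. The one substantial input from intersection theory that I would take as a black box is the \textbf{projective bundle theorem}: if $\pi\colon \P(\mathcal E)\to X$ denotes the bundle of lines in $\mathcal E$, with $\mathcal O(-1)\hookrightarrow\pi^*\mathcal E$ the tautological subbundle and $\zeta:=c_1(\mathcal O_{\P(\mathcal E)}(1))\in A^1(\P(\mathcal E))$, then $A(\P(\mathcal E))$ is a free $A(X)$-module with basis $1,\zeta,\dots,\zeta^{r-1}$ via $\pi^*$. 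This is proved by Noetherian induction on $X$ using the excision (localization) exact sequence for Chow groups together with compatibility of flat pullback and proper pushforward, the trivial-bundle case being the classical identity $A(X\times\P^{r-1})\cong A(X)[\zeta]/(\zeta^r)$.

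Granting this, the construction is immediate: $\zeta^r$ has a unique expression in the lower powers, so the relation $\zeta^r+\pi^*(c_1)\zeta^{r-1}+\cdots+\pi^*(c_r)=0$ defines classes $c_i\in A^i(X)$, and we set $c(\mathcal E):=1+c_1+\cdots+c_r$. Property (4) (functoriality) follows from the base-change identity $\P(\varphi^*\mathcal E)=Y\times_X\P(\mathcal E)$, which carries $\zeta$ to $\zeta$ and hence the defining relation to its pullback. Property (1) is the case $r=1$: then $\P(\mathcal L)=X$, the tautological subbundle is $\mathcal L$ itself so $\zeta=-c_1(\mathcal L)$, and the relation $\zeta+\pi^*c_1=0$ gives $c_1=c_1(\mathcal L)$, matching the divisor-theoretic description. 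For property (5), iterating the construction produces the complete flag bundle $q\colon\mathrm{Fl}(\mathcal E)\to X$, over which $q^*\mathcal E$ acquires a filtration with line-bundle quotients $\mathcal L_1,\dots,\mathcal L_r$, and $q^*\colon A(X)\to A(\mathrm{Fl}(\mathcal E))$ is injective by repeated application of the projective bundle theorem; any universal identity among Chern classes may therefore be checked on $\mathrm{Fl}(\mathcal E)$.

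The genuine content is in the last two properties. For Whitney's formula (3), one first shows by induction on $r$, using the structure of $\P(\mathcal L_1\oplus\cdots\oplus\mathcal L_r)$ and the relation above, that $c\bigl(\bigoplus_i\mathcal L_i\bigr)=\prod_i(1+c_1(\mathcal L_i))$; multiplicativity in a short exact sequence $0\to\mathcal E'\to\mathcal E\to\mathcal E''\to 0$ then follows by pulling the sequence back to a variety over which all three terms split (by (5)) and comparing the factored products, using injectivity of the pullback. For the degeneracy-locus property (2), the key case is $i=r$: a global section of $\mathcal E$ corresponds to a map $\mathcal O_X\to\mathcal E$, its zero scheme $D$ is cut locally by $r$ equations, and a Koszul-complex computation identifies $[D]$ with $c_r(\mathcal E)$ whenever $D$ has the expected codimension $r$; the general $c_i$ is reduced to this by replacing $\mathcal E$ with the rank-$i$ quotient obtained by dividing out $r-i$ of the given sections, which is compatible with the Whitney relation just established. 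Uniqueness is then formal: any assignment satisfying (1), (3), (5) must, after pullback to $\mathrm{Fl}(\mathcal E)$, equal $\prod_i(1+c_1(\mathcal L_i))$, and injectivity of $q^*$ forces it to coincide with our $c(\mathcal E)$.

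I expect the \textbf{main obstacle} to be the projective bundle theorem, which underpins both the construction and the splitting principle, together with the Koszul/transversality input needed for property (2); these are exactly the points resting on the foundational apparatus of Chow groups (rational equivalence, proper pushforward, flat pullback, excision) rather than on formal manipulation. Once the projective bundle formula is in hand, the construction and the verification of (1), (3), (4), (5) are bookkeeping, and only (2) requires a genuinely geometric genericity argument — harmless here since $\operatorname{char}\K=0$.
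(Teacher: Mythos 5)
The paper does not prove this theorem at all: it is quoted as a standard foundational result, with the proof delegated to the cited references (Eisenbud--Harris, Theorem 5.3, and Hartshorne, Appendix A, which in turn rest on Grothendieck and Fulton). Your outline is essentially the classical Grothendieck construction that those references carry out -- define $c_i(\mathcal E)$ by the relation $\zeta^r+\pi^*(c_1)\zeta^{r-1}+\cdots+\pi^*(c_r)=0$ in $A(\P(\mathcal E))$ granted the projective bundle theorem, verify the axioms, and get uniqueness through the flag bundle -- so in substance you are reproducing the cited proof rather than deviating from the paper. Two small points deserve sharpening. First, in your Whitney step, pulling the exact sequence $0\to\mathcal E'\to\mathcal E\to\mathcal E''\to 0$ back to the flag bundle does \emph{not} make it split as a direct sum of line bundles (the extension class need not die), so you cannot literally invoke (5) as stated; what you actually need is the lemma that a bundle admitting a filtration with line-bundle quotients $\mathcal L_1,\dots,\mathcal L_r$ has $c(\mathcal E)=\prod_i(1+c_1(\mathcal L_i))$, proved e.g.\ by showing $\prod_i\left(\zeta+\pi^*c_1(\mathcal L_i)\right)=0$ in $A(\P(\mathcal E))$ via the nowhere-vanishing-section argument. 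Second, for property (2) the general degeneracy locus of $r-i+1$ sections is handled in Fulton/Eisenbud--Harris by the Thom--Porteous machinery (or by the Segre-class description), not merely by quotienting out $r-i$ sections, since those sections may themselves degenerate; your reduction works only after a genericity argument ensuring the quotient is locally free off a set of large codimension. With those two repairs your sketch is the standard argument, and the main load-bearing input is, as you say, the projective bundle theorem together with the foundational Chow-group apparatus.
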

\begin{definition}
Using the notations of Theorem \ref{thmChernClasses}, we call $c(\mathcal E)$ the \emph{total Chern class}  of $\mathcal E$, and for all $i$ we call $c_i(\mathcal E)\in A^i(X)$ the $i$-th Chern class of $\mathcal E$.
\end{definition}

Combining Whitney's formula and the Splitting Principle, one immediately obtains, for instance, that if $\mathcal E$ is a vector bundle of rank $r$, then $c_i(\mathcal E)=0$ for all $i>r$. We call $c_r(\mathcal E)$ the \emph{top Chern class} of $\mathcal E$.

We now prove that the degree of the hyperflex locus of the variety $V$ is easily deduced from the top Chern class of the bundle of relative principal parts on $\Phi$. Recall that for $X$ smooth and proper, the degreee map $$\deg :A(X)\to \Z$$ is defined by the usual degree of $0$-cycles if $\alpha\in A^{\dim(X)}$ while $\deg(\alpha)=0$ if  $\alpha\in A^{i}(X)$, $i<\dim(X)$, see \cite[Definition 1.4]{Ful}.

\begin{proposition}\label{cor:degVk+1}
Denote $\zeta=\beta^*(\omega)\in A^1(\Phi)$ the pull-back of the hyperplane class $\omega$ of $\P^n$.  For a general $V\in \P^N$, we have 
$$
\deg(V_{k})=\deg(c_{k}(\cE^{k-1}(d))\cdot \zeta^{2n-1-k}). 
$$
\end{proposition}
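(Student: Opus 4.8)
The plan is to realize $V_k$ (for general $V$) as the degeneracy locus of the section $\tau_\alpha \in H^0(\Phi,\cE^{k-1}(d))$ and then apply property \eqref{itthmCC:2} of Theorem \ref{thmChernClasses}. By Corollary \ref{cor:sigmaF_reduced}(2), for general $\alpha\in \P^N$ the subscheme $\{\tau_\alpha=0\}\subset \Phi$ is reduced of codimension equal to the rank $k=\operatorname{rk}\cE^{k-1}(d)$ (its dimension is $2n-1-k$ by Proposition \ref{prop:dimSigmak} together with Proposition \ref{prop:birational}, or alternatively by Theorem \ref{thm:generic}(1)). Hence the locus where the single section $\tau_\alpha$ fails to be a nowhere-vanishing section — i.e. where $\tau_\alpha,\emptyset$ viewed as $r-i = k-k$ sections are \lq\lq linearly dependent\rq\rq\ — is precisely $\{\tau_\alpha = 0\}$, and it has the expected codimension $k$. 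Property \eqref{itthmCC:2} then yields $c_k(\cE^{k-1}(d)) = [\{\tau_\alpha=0\}] \in A^k(\Phi)$.

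First I would fix a general $\alpha$ so that $\{\tau_\alpha=0\}$ is reduced of pure codimension $k$ and maps birationally onto $V_{\alpha,k}$ via $\beta:\Phi\to\P^n$ (Corollary \ref{cor:sigmaF_reduced}(2)). Next I would recall that the degree of a subvariety $W\subset \Phi$ of dimension $e=2n-1-k$, when $W$ is mapped birationally onto its image $\beta(W)\subset\P^n$, is computed by intersecting $[W]$ with $\zeta^e$: indeed $\zeta=\beta^*\omega$ and by the projection formula $\deg([W]\cdot\zeta^e)=\deg(\beta_*[W]\cdot\omega^e)=\deg(V_{\alpha,k})$, using that $\beta_*[W]=[\beta(W)]=[V_{\alpha,k}]$ precisely because the restriction of $\beta$ to $W$ is birational onto $V_{\alpha,k}$ (and $V_{\alpha,k}$ has dimension $e$ by Theorem \ref{thm:generic}(1)). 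Combining the two displays gives
\begin{equation*}
\deg(V_k) = \deg([\{\tau_\alpha=0\}]\cdot\zeta^{2n-1-k}) = \deg(c_k(\cE^{k-1}(d))\cdot\zeta^{2n-1-k}),
\end{equation*}
which is the assertion.

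The only genuinely delicate point is the justification that $\beta_*[\{\tau_\alpha=0\}] = [V_{\alpha,k}]$ with multiplicity one, i.e. that the birational morphism of Corollary \ref{cor:sigmaF_reduced}(2) really does push the fundamental cycle forward to the reduced fundamental cycle of $V_{\alpha,k}$. This follows from the general fact that a proper birational morphism $W\to\beta(W)$ of varieties satisfies $\beta_*[W]=[\beta(W)]$ (the pushforward of a cycle along a generically finite map is its degree times the image, and here the degree is $1$); one must only check $\dim V_{\alpha,k}=\dim\{\tau_\alpha=0\}$, which is exactly Theorem \ref{thm:generic}(1) matched against Proposition \ref{prop:dimSigmak}. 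A secondary subtlety is to make sure the edge case $k=2n-1=d+1$ (where $V_k$ is empty) and more generally the cases where $V_k$ is empty are excluded, but these are already excluded by the hypothesis that $V_k$ is nonempty implicit in writing $\deg(V_k)$; when $V_k$ is nonempty, Theorem \ref{thm:generic}(1) guarantees $2n-1-k\ge 0$ so the power $\zeta^{2n-1-k}$ makes sense. Everything else is a formal application of the properties of Chern classes and the projection formula, so I expect no real obstacle beyond bookkeeping.
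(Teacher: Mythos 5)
Your proposal is correct and takes essentially the same route as the paper: fix a general $\alpha$, use Corollary \ref{cor:sigmaF_reduced}(2) to identify $[V_{\alpha,k}]$ with the pushforward of the reduced cycle $[\{\tau_\alpha=0\}]$, apply Theorem \ref{thmChernClasses} item \eqref{itthmCC:2} to get $[\{\tau_\alpha=0\}]=c_{k}(\cE^{k-1}(d))$, and conclude by the projection formula together with $\dim V_{k}=2n-1-k$ from Theorem \ref{thm:generic}(1). Your explicit justification that the birational map pushes the fundamental cycle forward with multiplicity one is merely a more detailed spelling-out of a step the paper leaves implicit.
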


\begin{proof}
Let $\alpha\in \P^N$ be the class of a general degree $d$ polynomial and denote $V=V_\alpha$. We have $\dim V_{k}=2n-1-k$ by Theorem \ref{thm:generic} (1), so that $\deg(V_{k})=[V_{k}]\cdot \omega^{2n-1-k}$.  From point (2) of Corollary \ref{cor:sigmaF_reduced} we have $[V_{k}]=\pi_*[\tau_{\alpha}=0]$ and the projection formula \cite[Proposition 2.5]{Ful} gives
$$
\deg(V_{k})=\deg([\tau_\alpha=0]\cdot
\zeta^{2n-1-k}).
$$

Since $\tau_{\alpha}$ is a global section of the rank $k$ vector bundle $\cE^{k-1}(d)$,  by Theorem \ref{thmChernClasses} item \eqref{itthmCC:2} the Chow class of $\tau_{\alpha}=0$ coincides with the top Chern class $c_{k}(\cE^{k-1}(d))$.
Notice that $\codim (\tau_{\alpha}=0)+2n-1-k=2n-1=\dim \Phi$ so we are  computing the degree of a zero-cycle on $\Phi$, as required. 
\end{proof}

In order to compute $c_{k}(\cE^{k-1}(d))$, we will first compute $c(\cE^{k-1}(d))$ using that total Chern classes satisfy Whitney's formula together with the following key property satisfied by the  bundle of relative principal parts :

\begin{proposition}\label{prop:exact_sequence}
 \cite[Thm.11.2 (d)]{EisHar} We have $\cE^0(d)=\beta^*(\cO_{\P^n}(d))$. For $k\ge 1$, we have an exact sequence of vector bundles over $\Phi$
$$
0\,\rightarrow\, \cE^0 (d)\otimes \Sym^{k}(\Omega_{\Phi/\G(1,n)}) \,\rightarrow\, \cE^k(d) \,\rightarrow\, \cE^{k-1}(d) \,\rightarrow\, 0.
$$
\end{proposition}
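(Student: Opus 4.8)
The plan is to work locally on the chart $V \times_U V = \Spec A[t_1, t_2]$ introduced in the proof of Theorem \ref{thm:principal_part}, and to exhibit the exact sequence as coming from the chain of ideals $\cI_\Delta^{k+1} \subset \cI_\Delta^k \subset \cO_Z$. Recall that on such a chart $\cI_\Delta$ is the principal ideal $(t_1 - t_2) \subset A[t_1, t_2]$, so that we have a short exact sequence of sheaves on $Z$
\begin{equation*}
0 \to \cI_\Delta^k / \cI_\Delta^{k+1} \to \cO_Z/\cI_\Delta^{k+1} \to \cO_Z/\cI_\Delta^k \to 0.
\end{equation*}
First I would tensor this with $\pi_1^*(\beta^* \cO_{\P^n}(d)) = \pi_1^* \cM$, which is a line bundle on $Z$, hence flat, so exactness is preserved; then I would apply $\pi_{2*}$. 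Because all three sheaves involved are supported on the diagonal $\Delta$ and $\pi_2$ restricts to an isomorphism $\Delta \xrightarrow{\sim} \Phi$, the higher direct images $R^i\pi_{2*}$ vanish on these sheaves (a $\P^1$-bundle affine computation, exactly as in \eqref{eq:E(V)}), so $\pi_{2*}$ is exact on this sequence. The middle and right terms become $\cE^k(d)$ and $\cE^{k-1}(d)$ by definition, which gives the right half of the claimed sequence.

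The remaining point is to identify the left term $\pi_{2*}\big(\pi_1^*\cM \otimes \cI_\Delta^k/\cI_\Delta^{k+1}\big)$ with $\cE^0(d) \otimes \Sym^k(\Omega_{\Phi/\G(1,n)})$. The conormal sheaf $\cI_\Delta/\cI_\Delta^2$ of the diagonal of the fiber product $\Phi \times_{\G(1,n)} \Phi$ is canonically $\Omega_{\Phi/\G(1,n)}$ pulled back via the diagonal, and since $\Delta$ is a regularly embedded divisor-like subscheme locally cut out by the single nonzerodivisor $t_1 - t_2$ fibrewise (here $\Omega_{\Phi/\G(1,n)}$ has rank $1$), the standard identity $\cI_\Delta^k/\cI_\Delta^{k+1} \simeq \Sym^k(\cI_\Delta/\cI_\Delta^2)$ holds. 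Locally this is just the isomorphism sending $(t_1-t_2)^k$ to the $k$-th symmetric power of the generator $(t_1-t_2)$ of \eqref{eq:iso2}. Then $\pi_1^*\cM \otimes \cI_\Delta^k/\cI_\Delta^{k+1}$, restricted to $\Delta$ and pushed to $\Phi$, becomes $\beta^*\cO_{\P^n}(d) \otimes \Sym^k(\Omega_{\Phi/\G(1,n)}) = \cE^0(d) \otimes \Sym^k(\Omega_{\Phi/\G(1,n)})$, because $\pi_1$ agrees with $\pi_2$ after restriction to $\Delta$ up to the identification $\Delta \simeq \Phi$, and $\cE^0(d) = \beta^*\cO_{\P^n}(d)$ is the $k=0$ case. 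The case $k=1$, where $\cE^0(d) = \beta^*\cO_{\P^n}(d)$, is exactly the statement $\cI_\Delta/\cI_\Delta^2 = \Omega_{\Phi/\G(1,n)}$.

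The main obstacle is bookkeeping the two projections: $\pi_1$ and $\pi_2$ do not play symmetric roles ($\pi_{2*}$ gives the $\cO_\Phi$-module structure, while $\pi_1^*$ carries the twist), so one must be careful that the identifications of the graded pieces of $\cO_Z/\cI_\Delta^{k+1}$ are compatible with the $\cO_\Phi$-module structure coming from $\pi_2$, not from $\pi_1$. On the local model $A[t_1,t_2]/(t_1-t_2)^{k+1}$ this is transparent from the decomposition \eqref{eq:iso2} as a free $A[t_2]$-module with basis $1, (t_1-t_2), \dots, (t_1-t_2)^k$, which simultaneously displays the filtration by powers of $(t_1-t_2)$ and its associated graded; globalizing requires only checking that the gluing data for $\Omega_{\Phi/\G(1,n)}$ match, which follows since $t_1 - t_2$ is the pullback of a local coordinate on the fibers of $\Phi \to \G(1,n)$. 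One should also note the sequence consists of vector bundles because each term is, so exactness as sheaves suffices; alternatively, since we only need it to compute Chern classes via Whitney's formula, exactness on the generic point would do, but the local freeness is immediate from Theorem \ref{thm:principal_part}(1) applied to each $\cE^j(d)$ and the rank count $k+1 = 1 + k$.
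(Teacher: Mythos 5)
Your proof is correct. Note that for this proposition the paper supplies no argument of its own: it is stated with a citation to \cite[Thm.~11.2]{EisHar}, so there is no internal proof to compare against; what you have written is essentially the standard argument behind that reference, grafted onto the local model $V\times_U V=\Spec A[t_1,t_2]$, $\cI_\Delta=(t_1-t_2)$, that the paper sets up in its sketch of Theorem \ref{thm:principal_part}. All the delicate points are handled: tensoring the filtration sequence $0\to\cI_\Delta^k/\cI_\Delta^{k+1}\to\cO_Z/\cI_\Delta^{k+1}\to\cO_Z/\cI_\Delta^{k}\to 0$ with the line bundle $\pi_1^*\cM$ preserves exactness; $\pi_{2*}$ is exact here because all three sheaves are pushed forward from infinitesimal neighborhoods of $\Delta$, which are affine (indeed finite, free of the expected rank) over $\Phi$ via $\pi_2$ --- your appeal to the local computation of \eqref{eq:E(V)}, where everything is a sequence of free $A[t_2]$-modules, makes this explicit; the identification $\cI_\Delta^k/\cI_\Delta^{k+1}\simeq\Sym^k(\cI_\Delta/\cI_\Delta^2)$ uses that $\Delta\subset Z$ is regularly embedded of codimension one (locally cut out by the nonzerodivisor $t_1-t_2$), which you state; and $\delta^*(\cI_\Delta/\cI_\Delta^2)\simeq\Omega_{\Phi/\G(1,n)}$ is the standard description of relative differentials via the relative diagonal. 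Your bookkeeping of the asymmetric roles of $\pi_1$ and $\pi_2$ is resolved correctly by the observation that $\pi_1\circ\delta=\pi_2\circ\delta=\mathrm{id}_\Phi$, which also yields the first claim $\cE^0(d)=\beta^*\cO_{\P^n}(d)$ (the case $\cO_Z/\cI_\Delta=\cO_\Delta$) that you treat somewhat in passing; it would be worth stating that one-line case explicitly rather than folding it into the general identification, but nothing is missing mathematically.
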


This will allow us to compute $c(\cE^{k-1}(d))\in A(\Phi)$ by induction. In order to compute in $A(\Phi)$, we use Schubert calculus on the Grassmannian, as introduced for instance in \cite[Chapter 4]{EisHar}. 

Given $a,b\in \N$ such that $0\le b \le a \le n-1$, we denote $\sigma_{a,b} \in A^{a+b}(\G(1,n))$ the corresponding Schubert class, denoting for short $\sigma_a=\sigma_{a,0}$. Each $\sigma_{a,b}$ is the class of a subvariety of $\G(1,n)$ given by those lines $L\subset \P^n$ whose intersections with each linear projective subspace $W\subset \P^n$ of a given projective flag of $\P^n$ have dimension at least $0$ or $1$, depending on the values of $a,b$ and $\dim W$ \cite[Chapter 4.1]{EisHar}. This definition does not depend on the chosen flag. The Schubert classes form a basis of the free abelian groupe $A(\G(1,n))$ and there are explicit formulas for the  product of two Schubert classes \cite[Corollaries 4.7 and 4.8]{EisHar}. Although we won't use this directly, let us mention for completeness that the Chow rings of $\Phi$ and $\G(1,n)$ are related by
\begin{equation}\label{eq:ChowRingPhi}
A(\Phi)=A(\G(1,n))[\zeta]/(\zeta^2-\sigma_1 \zeta+\sigma_{1,1}),
\end{equation}
see \cite[Section 9.3.1]{EisHar}.
In what follows, we still denote by $\sigma_{a,b}\in A^{a+b}(\Phi)$ the pull-back of $\sigma_{a,b}$ by $\Phi\to \G(1,n)$ for readability. Also, given $0\le \ell\le k-1$, we denote for short
\begin{equation}\label{eq:rellk}
\mu_{\ell}(d,k)=d(k-1)!\left(\sum_{1\leq i_1<\cdots<i_{\ell}\leq k-1}\frac{(d-2i_1)\cdots(d-2i_\ell)}{i_1\cdots i_\ell}\right)
\end{equation}
with convention $\mu_0(d,k)=d(k-1)!$. Note that $\mu_{\ell}(d,k)\in \N$ for all $\ell$. We get:

\begin{theorem}\label{thm:degree_in_terms _of_Chern_classes} Let $V\subset \P^n$ be a general hypersurface of degree $d$. 
 Let $n+1 \le k \le d+1$ and $k\le 2n-1$. Then 
\begin{equation}\label{eq:degree}
\deg(V_{k})= \sum_{\ell=0}^{k-n} \mu_{\ell}(d,k)\, \deg\left(\sigma_{2n-k-1+\ell} \,\sigma_1^{k-1-\ell}\right)_{\G(1,n)}  .
\end{equation}
\end{theorem}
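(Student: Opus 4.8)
The plan is to start from Proposition \ref{cor:degVk+1}, which reduces the computation of $\deg(V_k)$ to computing $\deg\!\left(c_k(\cE^{k-1}(d))\cdot \zeta^{2n-1-k}\right)$ in the Chow ring $A(\Phi)$. The first step is therefore to compute the total Chern class $c(\cE^{k-1}(d))$ using Proposition \ref{prop:exact_sequence}. Since $\cE^0(d)=\beta^*\cO_{\P^n}(d)$ is a line bundle with $c_1=d\zeta$, and since the relative cotangent bundle $\Omega_{\Phi/\G(1,n)}$ of the $\P^1$-bundle $\beta\colon\Phi\to\G(1,n)$ is a line bundle, each graded piece $\cE^0(d)\otimes \Sym^j(\Omega_{\Phi/\G(1,n)})$ appearing in the filtration is a line bundle. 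By Whitney's formula applied inductively to the exact sequences, $c(\cE^{k-1}(d))=\prod_{j=0}^{k-1}\bigl(1+c_1(\cE^0(d)\otimes \Sym^j(\Omega_{\Phi/\G(1,n)}))\bigr)$. So I first need $c_1(\Omega_{\Phi/\G(1,n)})$: using the relative Euler sequence for the $\P^1$-bundle $\Phi=\P(\cS)$ (or $\P(\cS^\vee)$) where $\cS$ is the tautological rank-$2$ subbundle on $\G(1,n)$, one identifies $\Omega_{\Phi/\G(1,n)}$ with a twist of $\cO_\Phi(-2)$ by a pullback from the Grassmannian, so that $c_1(\Omega_{\Phi/\G(1,n)})=-2\zeta+\beta^*(\text{something})$; concretely one finds $c_1(\Omega_{\Phi/\G(1,n)})=-2\zeta+\sigma_1$ (up to the precise normalization dictated by \eqref{eq:ChowRingPhi}). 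Thus $c_1\bigl(\cE^0(d)\otimes\Sym^j(\Omega_{\Phi/\G(1,n)})\bigr)=d\zeta+j(-2\zeta+\sigma_1)=(d-2j)\zeta+j\sigma_1$, and hence
\begin{equation*}
c(\cE^{k-1}(d))=\prod_{j=0}^{k-1}\bigl(1+(d-2j)\zeta+j\sigma_1\bigr).
\end{equation*}

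The second step is to extract $c_k$ and multiply by $\zeta^{2n-1-k}$. Expanding the product, $c_k(\cE^{k-1}(d))$ is the degree-$k$ part, a polynomial in $\zeta$ and $\sigma_1$. When we multiply by $\zeta^{2n-1-k}$ and push to a zero-cycle, every monomial $\zeta^a\sigma_1^b$ with $a+b=k$ becomes $\zeta^{a+2n-1-k}\sigma_1^b=\zeta^{2n-1-b}\sigma_1^b$. Here I invoke the relation \eqref{eq:ChowRingPhi}, or more simply the projection formula for $\beta\colon\Phi\to\P^n$ together with the fact that $\beta_*(\zeta^{2n-1-b})$ — a pushforward along a $\P^1$-bundle — lands in $A(\P^n)$ and reindexes things as a Schubert computation on $\G(1,n)$: one shows $\deg_\Phi(\zeta^{2n-1-b}\sigma_1^b)=\deg_{\G(1,n)}(\sigma_{b'}\sigma_1^{\,?})$ for appropriate indices. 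More precisely, pushing forward along $\Phi\to\G(1,n)$ converts a power of $\zeta$ into Schubert classes via $\pi_*(\zeta^{m+1})=\sigma_m$-type formulas (Segre classes of $\cS$), which turns $\deg_\Phi(\zeta^{2n-1-b}\sigma_1^b)$ into $\deg_{\G(1,n)}(\sigma_{2n-2-b}\,\sigma_1^{\,b})$ — but after matching the combinatorics with the stated coefficients $\mu_\ell(d,k)$, the monomial indexed by $\ell$ contributes $\deg(\sigma_{2n-k-1+\ell}\,\sigma_1^{k-1-\ell})$. The bookkeeping is: write the degree-$k$ part of $\prod_{j=0}^{k-1}(1+(d-2j)\zeta+j\sigma_1)$ as $\sum_{\ell} c_{\ell}\,\zeta^{k-\ell}\sigma_1^{\ell}$ (only terms with at most $k$ factors, one factor contributing the "$1$"), identify the coefficient $c_\ell$ with $\mu_\ell(d,k)/(k-1)!$ times the appropriate normalization — indeed the coefficient of $\zeta^{k-\ell}\sigma_1^\ell$ is $\sum_{|S|=\ell}\prod_{j\in S} j \cdot \prod_{j\notin S,\, j<k}(d-2j)$ divided suitably — and then note $\mu_\ell(d,k)$ in \eqref{eq:rellk} is exactly $d(k-1)!\sum_{1\le i_1<\cdots<i_\ell\le k-1}\frac{\prod(d-2i_r)}{\prod i_r}$, which is $\sum_{1\le i_1<\cdots<i_\ell\le k-1}\bigl(\prod i_r\bigr)^{-1}\cdot\prod(d-2i_r)\cdot d(k-1)!$; matching with the product $d\prod_{j=1}^{k-1}(jX+(d-2j))$ from Theorem \ref{thm:main} shows these are the correct coefficients. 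Finally, note that monomials $\zeta^{k-\ell}\sigma_1^\ell$ with $\ell>k-n$ (equivalently $k-\ell<n$) contribute $\zeta^{2n-1-\ell}\sigma_1^\ell$ with $2n-1-\ell>n-1$; but $\zeta^{n+1}=0$ after pushing to $\P^n$-directions is not quite the vanishing — rather, $\sigma_{2n-k-1+\ell}$ would have first index $\ge n$ and hence vanish as a Schubert class on $\G(1,n)$, which kills those terms. This is why the sum in \eqref{eq:degree} runs only up to $\ell=k-n$.

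The main obstacle I anticipate is the precise identification of $c_1(\Omega_{\Phi/\G(1,n)})$ inside $A^1(\Phi)$ relative to the ring structure \eqref{eq:ChowRingPhi}, and the careful pushforward $\beta_*$ (or $\pi_*$) that converts powers of $\zeta$ twisted by $\sigma_1$ into honest Schubert-class products on $\G(1,n)$ with the exact shifts $\sigma_{2n-k-1+\ell}\,\sigma_1^{k-1-\ell}$. Getting the normalizations right — which $\P(\cdot)$ convention, whether $\Omega$ contributes $-2\zeta+\sigma_1$ or $2\zeta-\sigma_1$ or $-2\zeta+$ (another Schubert class), and how $\zeta^2=\sigma_1\zeta-\sigma_{1,1}$ interacts when reducing high powers of $\zeta$ — is where sign and index errors creep in. I would pin this down by a sanity check in the classical flex case $k=n+1$, where \eqref{eq:degree} must reproduce the known degree formula of \cite{BDSW} (cf.\ Corollary \ref{cor:deg_flex_locus}), and by the base case $k=1$ where $\cE^0(d)=\beta^*\cO_{\P^n}(d)$ and $\deg(V_1)=\deg V=d$ trivially. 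Once the Chern-class product and the pushforward dictionary are established, the remaining algebraic manipulation is the routine matching of the expansion coefficients of $\prod_{j=0}^{k-1}(1+(d-2j)\zeta+j\sigma_1)$ with the $\mu_\ell(d,k)$, together with the observation that Schubert classes $\sigma_a$ with $a\ge n$ vanish on $\G(1,n)$, which simultaneously truncates the sum at $\ell=k-n$ and makes \eqref{eq:degree} well-defined.
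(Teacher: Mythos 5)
Your proposal is correct and follows essentially the same route as the paper: Whitney's formula applied to the filtration of Proposition \ref{prop:exact_sequence} with $c_1(\Omega_{\Phi/\G(1,n)})=\sigma_1-2\zeta$ (the paper cites \cite[Thm.~11.4]{EisHar} where you invoke the relative Euler sequence), extraction of the top Chern class from $\prod_{j=0}^{k-1}(1+j\sigma_1+(d-2j)\zeta)$, multiplication by $\zeta^{2n-1-k}$, and pushforward to $\G(1,n)$ via the Segre classes of the tautological subbundle, with the coefficients matching $\mu_\ell(d,k)$ exactly as in the paper. The only cosmetic difference is that you justify the truncation at $\ell=k-n$ by the vanishing of $\sigma_a$ for $a\ge n$, whereas the paper uses the equivalent fact that $\zeta^w=0$ for $w>n$ (which, contrary to your hesitation, is valid since $\zeta$ is pulled back from $\P^n$).
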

\begin{proof} If $k> 2n-1$ then $V_k$ is empty, by Theorem \ref{thm:generic}. Otherwise, by Whitney's formula (Theorem \ref{thmChernClasses} item \eqref{itthmCC:3}) and Proposition \ref{prop:exact_sequence}, we get by induction that the total Chern class of $\cE^{k-1}(d)$ satisfies
$$
c(\cE^{k-1}(d))=\prod_{j=0}^{k-1} c\left(\beta^*(\cO_{\P^n}(d))\otimes \Sym^{j}(\Omega_{\Phi/\G(1,n)})\right)\in A(\Phi).
$$
The bundle $\Omega_{\Phi/\G(1,n)}$ is a line bundle over $\Phi$, and we thus have
$$
c\left(\Sym^{j}(\Omega_{\Phi/\G(1,n)})\right)=1+j c_1(\Omega_{\Phi/\G(1,n)}).
$$
It turns out that we can compute this first Chern class. Namely, the dual of $\Omega_{\Phi/\G(1,n)}$ is the relative tangent bundle $\mathcal{T}_{\Phi/\G(1,n)}$ of $\Phi\to \G(1,n)$ and \cite[Thm.11.4]{EisHar} gives
$$
c_1(\Omega_{\Phi/\G(1,n)})=-c_1(\mathcal{T}_{\Phi/\G(1,n)})=\sigma_1-2\zeta \,\,\in\, A^1(\Phi).
$$ 
Using $c_1(\beta^*(\cO_{\P^n}(d)))=d\zeta$, this leads to
$$
c(\cE^{k-1}(d))=\prod_{j=0}^{k-1} (1+j\sigma_1+(d-2j)\zeta).
$$
By Proposition \ref{cor:degVk+1},  we need to multiply the homogeneous part $c_{k}(\cE^{k-1}(d))$ of $c(\cE^{k-1}(d))$ by $ \zeta^{2n-1-k}$, leading to:
\begin{equation}\label{eq:degV_k}
\deg(V_{k})=\deg\left( \zeta^{2n-1-k}\cdot \prod_{j=0}^{k-1} (j  \sigma_1+(d-2j)\zeta)\right)_\Phi.
\end{equation}
Using symmetric elementary functions, a straightforward computations shows that
\begin{equation}\label{eq:mukl}
\prod_{j=0}^{k-1} (j  \sigma_1+(d-2j)\zeta)=\sum_{\ell=0}^{k-1} \mu_{\ell}(d,k)\zeta^{\ell+1} \sigma_1^{k-1-\ell},
\end{equation}
the second equality using notations \eqref{eq:rellk}. Combined with \eqref{eq:degV_k}, and considering that $\sigma_1^t\zeta^w=0$ for every $w>n$ since $\zeta$ is the pull-back of the hyperplane class on $\P^n$, we get 
\begin{equation}\label{eq:degExp}
\deg(V_k)= \deg\left(\sum_{\ell=0}^{k-n} \mu_{\ell}(d,k)\zeta^{2n-k+\ell} \sigma_1^{k-1-\ell}\right)_\Phi.
\end{equation}
There remains to express the degrees of the non-zero monomials $\zeta^{2n-k+\ell}\sigma_1^{k-1-\ell}$ appearing in \eqref{eq:degExp} in terms of the degrees of the Schubert classes. Instead of using \eqref{eq:ChowRingPhi}, we rather use the fact that $\Phi=\mathbb P\mathcal S$ is the projectivization of the universal subbundle $\mathcal S\to \G(1,n)$ of the Grassmannian \cite[Section 3.2.3]{EisHar}. Denoting by  $\gamma$ the projection  $\Phi\to  \G(1,n)$, we get that 
 \[
 \deg(\zeta^{a}\sigma_1^{b})_\Phi=\deg \gamma_\ast(\zeta^{a}\sigma_1^{b})_{\G(1,n)}=\deg(s_{a-1}(\mathcal S)\sigma_1^{b})_{\G(1,n)},
 \]
whenever $a+b=\dim \Phi=2n-1$, where $s_a(\mathcal S)\in A^a(\G(1,n))$ stands for the $a$-th Segre class of $\mathcal{S}$ \cite[Definition 10.1]{EisHar}. The total Segre class $s(\mathcal S):=\sum_{a\ge 0} s_a(\mathcal S)$ satisfies
$$
s(\mathcal S)=\frac{1}{c(\mathcal S)}=1+\sigma_1+\sigma_2+\cdots+\sigma_{n-1},
$$
the first equality by \cite[Proposition 10.3]{EisHar}, and the last equality by the last statement of \cite[Section 5.6.2]{EisHar} combined with \cite[Corollary 4.10]{EisHar}. It follows that $s_{a}(\mathcal S)=\sigma_{a}$ for all $a\le n-1$, with convention $\sigma_0=1$. 
Summing up, we deduce that
\[
\deg(\zeta^{2n-k+\ell}\sigma_1^{k-1-\ell})_\Phi=\deg(\sigma_{2n-k-1+\ell}\sigma_1^{k-1-\ell})_{\G(1,n)}
\]
for all $\ell\le k-n$, which combined with \eqref{eq:degExp} leads to the desired formula. 
\end{proof}

We now explicit \eqref{eq:degree} by giving a formula for $\deg(\sigma_{a}\sigma_1^m)$. Let us first recall the well-known Pieri's formulas \cite[Proposition 4.9]{EisHar} in the special case $\sigma_{a,b}\sigma_1$ for the grassmannian of lines $\G(1,n)$. Recall that $\sigma_a=\sigma_{a,0}$.
\begin{proposition}[Pieri's formula]\label{prop:pieriforus}
Let $a,b\in \N$ such that $0\le b \le a \le n-1$. Then
\[
\sigma_{a,b}\sigma_1=\begin{cases}\sigma_{a+1,b}+\sigma_{a,b+1} \quad\,\, \text{ if } \,b+1\leq a\leq n-2\\
\sigma_{a,b+1} \qquad \qquad \quad  \text{ if } \, b+1\leq a=n-1 \\
\sigma_{a+1,b} \qquad \quad \quad \,\,\,\,\,\, \text{ if } \, b+1>a \,\,{\text and}\,\, a\leq n-2\\
0 \quad \qquad \quad \qquad \,\,\,\,\,\, \text{ if } \, b=a=n-1. \end{cases}
\]
\end{proposition}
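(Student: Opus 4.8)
The plan is to prove Pieri's formula for $\sigma_{a,b}\cdot\sigma_1$ on $\G(1,n)$ by reducing it to the general Pieri rule \cite[Proposition 4.9]{EisHar}, which states that $\sigma_{a,b}\cdot\sigma_c = \sum \sigma_{a',b'}$ where the sum runs over all $(a',b')$ with $a'+b' = a+b+c$, $a \le a' \le n-1$, and $b \le b' \le a$ (the interlacing condition $a' \ge a \ge b' \ge b$). Specializing $c=1$, the total degree goes up by one, and the admissible pairs $(a',b')$ are exactly those obtained from $(a,b)$ by adding $1$ to either the first or the second index, subject to staying inside the index range $0 \le b' \le a' \le n-1$ of valid Schubert symbols for $\G(1,n)$.

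First I would write out the two candidate pairs: $(a+1,b)$ and $(a,b+1)$. The pair $(a+1,b)$ is a valid symbol precisely when $a+1 \le n-1$, i.e. $a \le n-2$, and automatically satisfies $b \le a+1$ and the interlacing $a+1 \ge a \ge b \ge b$. The pair $(a,b+1)$ requires $b+1 \le a$ (so that it is a partition) together with $a \le n-1$; the interlacing $a \ge a \ge b+1 \ge b$ holds as soon as $b+1 \le a$. Then I would simply enumerate the four cases in the statement: (i) if $b+1 \le a$ and $a \le n-2$, both pairs are valid and we get $\sigma_{a+1,b}+\sigma_{a,b+1}$; (ii) if $b+1 \le a = n-1$, the pair $(a+1,b)=(n,b)$ is out of range, leaving only $\sigma_{a,b+1}$; (iii) if $b+1 > a$ (equivalently $b=a$, since always $b \le a$) and $a \le n-2$, the pair $(a,b+1)=(a,a+1)$ fails the partition condition, leaving only $\sigma_{a+1,b}$; (iv) if $b = a = n-1$, both pairs are out of range, so the product is $0$. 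This exhausts all possibilities since $b \le a \le n-1$ always, and the conditions "$b+1 \le a$ or $b=a$" and "$a \le n-2$ or $a = n-1$" partition the parameter space into exactly these four regions.

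I expect no serious obstacle here: this is a mechanical specialization of the known Pieri rule, and the only point requiring a little care is checking that the case split in the statement is genuinely exhaustive and mutually exclusive, and that in each case the pairs that survive are precisely the ones claimed. In particular one should note that "$b+1>a$" together with $b \le a$ forces $b = a$, which is why case (iii) can equivalently be phrased as $b = a$; this matches the geometric picture that $\sigma_{a,a}$ is the class of lines contained in a fixed $(a+1)$-plane, and multiplying by $\sigma_1$ (lines meeting a fixed $(n-2)$-plane) can only raise the first index. I would also remark that when $c=1$ the sum in the general Pieri formula never produces a term with a repeated symbol, so no multiplicities arise, consistent with the clean right-hand sides above.
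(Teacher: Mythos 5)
Your proposal is correct and matches what the paper does: the statement is presented there as a direct specialization of the general Pieri rule \cite[Proposition 4.9]{EisHar} to $c=1$ on $\G(1,n)$, and your case-by-case check of which symbols $(a+1,b)$, $(a,b+1)$ remain admissible is exactly the intended (and only needed) argument. The exhaustiveness/exclusivity check of the four cases is the right point of care, and you handled it correctly.
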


The other tool we use in order to get an explicit expression for $\deg(V_k)$ from \eqref{eq:degree} is the Catalan's trapezoid of order $a$, which generalize the Catalan's triangle. We recall here the definitions and some properties, for more details on Catalan's triangle and  trapezoids see for instance \cite{Reuv,Thomas}.

\begin{definition}\label{def:CatTrap}
Let $C_a(u, v)$ denote the $(u, v)$ entry of the Catalan’s trapezoid of order $a$, $a\in \mathbb N\setminus\{0\}$. 
The value of $C_a(u,v)$ is given by the following closed formula
\begin{equation}\label{eq:CatalanBinomial}
C_a(u,v)=\begin{cases}
    \binom{u+v}{v} \quad \text{ if } 0\leq v <a\\
    \binom{u+v}{v}-\binom{u+v}{v-a} \quad \text{ if } a\leq v \leq u+a-1\\
    0 \text{ otherwise}
\end{cases}.
\end{equation}

Equivalently, one can recursively define $C_a(u,v)$ in the following way. 
We define $C_a(u, 0) = 1$ for all $a$, 
and  $C_a(0, v) = 1$ for $0 \leq v \leq a-1$, and $C_a(u, v) = 0$  for all values $(u,v)$ such that $v>u+a-1$.
For all the other values of $u,v$,  $C_a(u,v)$ is defined by the following  recursive rule:
\begin{equation}\label{eq:defCatTrap}
C_a(u, v) = C_a(u-1, v) + C_a(u, v-1).
\end{equation}
For $a=1$, one obtains the well-known  \emph{Catalan's triangle} $C_1(u, v) = C(u, v)$.
\end{definition}

\begin{example}
The name Catalan's {\em trapezoid} becomes clear when we plot the numbers $C_a(u,v)$ in a table, with rows indexed by $u$ and columns by $v$.  Below, you find the tables for $a=2,3$. For sake of readability, if $C_a(u,v)=0$, in the following tables the  entry $u,v$ is empty.
\[
\begin{array}{|c||c|c|c|c|c|c|c|}
\hline
a=2& 0 &1& 2 & 3& 4 & 5 &6 \\
\hline \hline
0 &\mathbf{1}&\mathbf{1}&&&&&\\ \hline
1&\mathbf{1}& 2 & 2&&&&\\\hline
2&\mathbf{1} & 3 &5 &5&&&\\\hline
3&\mathbf{1}& 4 & 9 &14 &14 &&\\\hline
4&\mathbf{1}&5 &14 &28 &42 &42&\\\hline
5&\mathbf{1}& 6 &20 &48 &90 &132&132\\\hline
\end{array}
\quad \begin{array}{|c||c|c|c|c|c|c|c|c|}
\hline
a=3& 0 &1& 2 & 3& 4 & 5 &6 &7\\
\hline \hline
0 &\mathbf{1}&\mathbf{1}&\mathbf{1}&&&&&\\ \hline
1&\mathbf{1}& 2 & 3&3&&&&\\\hline
2&\mathbf{1} & 3 &6 &9&9&&&\\\hline
3&\mathbf{1}& 4 & 10 &19 &28 &28&&\\\hline
4&\mathbf{1}&5 &15 &34 &62 &90&90&\\\hline
5&\mathbf{1}& 6 &21 &55 &117 &207&297&297\\\hline
\end{array}
\]
\end{example}

By Pieri's formula and the values of the Catalan's trapezoid, we can now prove the following. Remember that  if $a\geq n$ or $b>a$ then $\sigma_{a,b}=0$ in $A(\G(1,n))$.

\begin{lemma} \label{lem:multRule}Consider $\sigma_{a,b}, \sigma_1 \in A(\G(1,n))$. Then
\[
\sigma_{a,b} \,\sigma_{1}^m=\sum_{\substack{0\leq i\leq m \\ b+i\leq a+m-i\leq n-1} } C_{a-b+1}(m-i,i)\,\sigma_{a+m-i,b+i},
\]
where $C_{a-b+1}(m-i,i)$ is the positive integer in the $(m-i)$-th row and $i$-th column of the Catalan's trapezoid of order $a-b+1$.
\end{lemma}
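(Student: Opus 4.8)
The plan is to argue by induction on $m$, iterating Pieri's formula (Proposition \ref{prop:pieriforus}) and tracking how the coefficients evolve. The base case $m=0$ is trivial: the right-hand side has only the term $i=0$, with $C_{a-b+1}(0,0)=1$ by definition, giving $\sigma_{a,b}$. For the inductive step, suppose the formula holds for $m$; then
\[
\sigma_{a,b}\,\sigma_1^{m+1}=\Bigl(\sigma_{a,b}\,\sigma_1^{m}\Bigr)\sigma_1=\sum_{\substack{0\le i\le m\\ b+i\le a+m-i\le n-1}} C_{a-b+1}(m-i,i)\,\bigl(\sigma_{a+m-i,\,b+i}\,\sigma_1\bigr),
\]
and I would expand each $\sigma_{a+m-i,\,b+i}\,\sigma_1$ via the four cases of Pieri. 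The key observation is that the two ``generic'' Pieri outputs $\sigma_{a+m-i+1,\,b+i}$ and $\sigma_{a+m-i,\,b+i+1}$ contribute to the coefficients of, respectively, $\sigma_{a+(m+1)-i,\,b+i}$ (same $i$) and $\sigma_{a+(m+1)-i',\,b+i'}$ with $i'=i+1$. So the coefficient of $\sigma_{a+(m+1)-j,\,b+j}$ in $\sigma_{a,b}\,\sigma_1^{m+1}$ picks up a contribution $C_{a-b+1}(m-j,j)$ from the term with $i=j$ and a contribution $C_{a-b+1}(m-(j-1),j-1)=C_{a-b+1}(m+1-j,\,j-1)$ from the term with $i=j-1$. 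Their sum is exactly $C_{a-b+1}(m+1-j,j)$ by the recursion \eqref{eq:defCatTrap}, which is what the lemma predicts at level $m+1$.

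The main obstacle is handling the boundary cases of Pieri's formula correctly, i.e. checking that the degenerate outputs ($0$ when $b+i=a+m-i=n-1$, and the suppression of the ``add one to the bottom index'' branch when $a+m-i=n-1$) are precisely mirrored by the vanishing conventions of the Catalan trapezoid. Concretely: when $a+m-i=n-1$, Pieri drops the $\sigma_{a+m-i+1,b+i}$ term, which would otherwise feed the coefficient of $\sigma_{a+(m+1)-i,b+i}$ at the same $i$; but that Schubert class has first index $n$ and is zero in $A(\G(1,n))$ anyway, so the constraint $a+(m+1)-i\le n-1$ in the sum already excludes it, and no bookkeeping is needed. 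Similarly, when $b+i=a+m-i=n-1$ Pieri gives $0$; here both putative successors have a forbidden index, so again the index constraints in the target sum remove them. The recursive characterization of $C_{a-b+1}$ in Definition \ref{def:CatTrap} — in particular $C_{a-b+1}(u,0)=1$, $C_{a-b+1}(0,v)=1$ for $0\le v\le a-b$, and $C_{a-b+1}(u,v)=0$ for $v>u+(a-b)$ — must be matched against the admissible range of $i$: the condition $b+i\le a+m-i$ rewrites as $i\le \tfrac{(a-b)+m}{2}$, and I would verify that the indices $(u,v)=(m-i,i)$ arising with a potentially nonzero Pieri contribution always fall in the region where \eqref{eq:CatalanBinomial} and \eqref{eq:defCatTrap} agree, so that the induction closes without spurious terms.

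I would organize the write-up as follows: first restate the four Pieri cases and note which one applies for each index $(u,v)=(a+m-i,\,b+i)$ in the range of summation; second, collect the contributions to a fixed target class $\sigma_{a+(m+1)-j,\,b+j}$; third, invoke \eqref{eq:defCatTrap} to combine the two incoming coefficients, and invoke the vanishing conventions (for both $\sigma$ and $C$) to dispose of the boundary terms; fourth, conclude. A small separate remark that $C_{a-b+1}(m-i,i)$ is a positive integer in the stated range follows directly from the closed formula \eqref{eq:CatalanBinomial}, since there $v=i\le m-i+\,(a-b)=u+a-b$ exactly under the summation constraint, placing the entry in the ``$\binom{u+v}{v}-\binom{u+v}{v-a}\ge 0$'' regime (or the strictly positive $\binom{u+v}{v}$ regime when $i<a-b+1$).
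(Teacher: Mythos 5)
Your plan is correct and is essentially the paper's own argument: induction on $m$, expanding $(\sigma_{a,b}\sigma_1^{m})\sigma_1$ with Pieri's formula (Proposition \ref{prop:pieriforus}) and combining the two incoming coefficients via the trapezoid recursion \eqref{eq:defCatTrap}, with the boundary cases absorbed by the vanishing conventions for $\sigma_{a,b}$ and $C_{a-b+1}$ — your write-up is in fact more explicit about this bookkeeping than the paper's. One cosmetic slip: when $a+m-i=n-1$ Pieri suppresses the branch that increments the \emph{first} index (the class $\sigma_{a+m-i+1,\,b+i}$), not the ``bottom'' one as you phrase it, but your subsequent reasoning uses the correct term, so nothing breaks.
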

\begin{proof}
In order to compute $\sigma_{a,b} \sigma_{1}^m$  we repeatedly apply Pieri's formula. Remember that  if $a\geq n$ or $b>a$ then $\sigma_{a,b}=0$ in $A(\G(1,n))$.
For $m=0$, the thesis is trivially true since for all $a\geq b$, $C_{a-b+1}(0,0)=1$.
Assume now the thesis to be true for $\sigma_{a,b}\sigma_1^{t}$, for every $t<m$. Then, for $\sigma_{a,b}\sigma_1^{m}$ we obtain
\[
\sigma_{a,b}\sigma_1^{m}=(\sigma_{a,b}\sigma_1^{m-1})\sigma_1=\sum_{\substack{0\leq i\leq m-1 \\  b+i\leq a+m-1-i\leq n-1} } C_{a-b+1}(m-1-i,i)\sigma_{a+m-1-i,b+i}\sigma_1.
\]
Using Pieri's formula  as stated in Proposition \ref{prop:pieriforus} on every summand, we obtain exactly the thesis, by Definition \ref{def:CatTrap} and in particular by the relation \eqref{eq:defCatTrap}.
\end{proof}

\begin{proposition}\label{prop:explDegree}
Let $n+1 \le k \le 2n-1$. For every $\ell\in \{0,\dots, k-n\}$ we obtain
\[
\sigma_{2n-k-1+\ell}\sigma_{1}^{k-1-\ell}= C_1(n-1,k-n-\ell)
 \sigma_{n-1,n-1}.
\]
Consequently, assuming $k\le d+1$, we get (using notations \eqref{eq:rellk}):
\begin{equation}\label{eq:final_degree}
\deg(V_{k})= N_k(n,d):= \sum_{\ell=0}^{k-n} \mu_{\ell}(d,k)C_1(n-1,k-n-\ell).
\end{equation}
\end{proposition}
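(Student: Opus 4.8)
The plan is to prove the identity $\sigma_{2n-k-1+\ell}\sigma_1^{k-1-\ell}=C_1(n-1,k-n-\ell)\,\sigma_{n-1,n-1}$ in $A(\G(1,n))$ by specializing Lemma \ref{lem:multRule} to $a=2n-k-1+\ell$, $b=0$, and $m=k-1-\ell$, and then to substitute into \eqref{eq:degree} to obtain \eqref{eq:final_degree}. First I would observe that since $\G(1,n)$ has dimension $2n-2$ and $\sigma_{n-1,n-1}$ is the class of a point (the top class), the degree $\deg(\sigma_{2n-k-1+\ell}\sigma_1^{k-1-\ell})_{\G(1,n)}$ is exactly the coefficient of $\sigma_{n-1,n-1}$ in the Pieri expansion; all other terms $\sigma_{c,e}$ appearing in Lemma \ref{lem:multRule} with $c+e=2n-2$ but $(c,e)\ne(n-1,n-1)$ automatically vanish because they would force $c\ge n$ (and hence $\sigma_{c,e}=0$ in $A(\G(1,n))$). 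So the content is to identify which single summand in Lemma \ref{lem:multRule} lands on $(n-1,n-1)$ and to read off its Catalan coefficient.

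Concretely: applying Lemma \ref{lem:multRule} with $a-b+1=2n-k+\ell$, the summand indexed by $i$ is $C_{2n-k+\ell}(k-1-\ell-i,\,i)\,\sigma_{2n-k-1+\ell+(k-1-\ell-i),\,i}=C_{2n-k+\ell}(k-1-\ell-i,i)\,\sigma_{2n-2-i,\,i}$. For this to equal a nonzero multiple of $\sigma_{n-1,n-1}$ we need $2n-2-i=n-1$, i.e. $i=n-1$. Thus there is exactly one contributing term, with row index $k-1-\ell-(n-1)=k-n-\ell$ and column index $n-1$, giving the coefficient $C_{2n-k+\ell}(k-n-\ell,\,n-1)$ of the trapezoid of order $2n-k+\ell$. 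The remaining point is the purely combinatorial identity
\[
C_{2n-k+\ell}(k-n-\ell,\,n-1)=C_1(n-1,\,k-n-\ell),
\]
which I would verify directly from the closed formula \eqref{eq:CatalanBinomial}. Indeed, in the trapezoid of order $a=2n-k+\ell$ we have row $u=k-n-\ell$ and column $v=n-1$; since $k\le 2n-1$ gives $v=n-1\ge (2n-k+\ell)-1=a-1\ge u+a-1$ only in the boundary regime, one checks $a\le v\le u+a-1$ holds (as $v=n-1$ and $u+a-1=n-1$), so $C_a(u,v)=\binom{u+v}{v}-\binom{u+v}{v-a}=\binom{k-1-\ell}{n-1}-\binom{k-1-\ell}{n-1-(2n-k+\ell)}=\binom{k-1-\ell}{n-1}-\binom{k-1-\ell}{k-n-1-\ell}$, while $C_1(n-1,k-n-\ell)=\binom{2n-2-k+\ell}{k-n-\ell}-\binom{2n-2-k+\ell}{k-n-\ell-1}$ by \eqref{eq:CatalanBinomial}; a short manipulation of binomial coefficients (using $\binom{k-1-\ell}{n-1}=\binom{k-1-\ell}{k-n-\ell}$ and $\binom{k-1-\ell}{k-n-1-\ell}=\binom{k-1-\ell}{n-\ell}$, then comparing with the Catalan-triangle entries) shows the two quantities agree.

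With the identity in hand, substitute into \eqref{eq:degree}: by Theorem \ref{thm:degree_in_terms _of_Chern_classes} and the fact that $\deg(\sigma_{n-1,n-1})_{\G(1,n)}=1$, we get $\deg(V_k)=\sum_{\ell=0}^{k-n}\mu_\ell(d,k)\deg(\sigma_{2n-k-1+\ell}\sigma_1^{k-1-\ell})_{\G(1,n)}=\sum_{\ell=0}^{k-n}\mu_\ell(d,k)\,C_1(n-1,k-n-\ell)=N_k(n,d)$, using the definition \eqref{eq:final_degree}, and noting the hypothesis $k\le d+1$ is exactly what is needed for Theorem \ref{thm:degree_in_terms _of_Chern_classes} to apply (so that all $\mu_\ell(d,k)$ are the relevant integers). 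The main obstacle I anticipate is the bookkeeping in the Catalan identity: making sure the boundary cases of \eqref{eq:CatalanBinomial} are invoked correctly (the entry $(k-n-\ell,n-1)$ sits precisely on the slanted edge $v=u+a-1$ of the trapezoid of order $2n-k+\ell$, which is why it collapses to a single Catalan-triangle number), and checking that no spurious nonzero Schubert classes of the wrong shape survive when $\ell<k-n$, i.e. that every term $\sigma_{2n-2-i,i}$ with $i<n-1$ has first index $\ge n$ and hence vanishes — which it does since $2n-2-i\ge n$ iff $i\le n-2$.
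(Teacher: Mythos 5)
Your proposal follows essentially the same route as the paper: apply Lemma \ref{lem:multRule} with $a=2n-k-1+\ell$, $b=0$, $m=k-1-\ell$, note that only the summand $i=n-1$ survives (all other $\sigma_{2n-2-i,i}$ vanish in $A(\G(1,n))$), and then identify $C_{2n-k+\ell}(k-n-\ell,n-1)=C_1(n-1,k-n-\ell)$ via \eqref{eq:CatalanBinomial} before substituting into Theorem \ref{thm:degree_in_terms _of_Chern_classes} together with $\deg(\sigma_{n-1,n-1})=1$; this is exactly the paper's proof. Only two small slips in your verification of the (true) Catalan identity: the closed formula gives $C_1(n-1,k-n-\ell)=\binom{k-1-\ell}{k-n-\ell}-\binom{k-1-\ell}{k-n-\ell-1}$ (top entry $k-1-\ell$, not $2n-2-k+\ell$), which matches $\binom{k-1-\ell}{n-1}-\binom{k-1-\ell}{k-n-1-\ell}$ by the symmetry $\binom{k-1-\ell}{n-1}=\binom{k-1-\ell}{k-n-\ell}$; and at $\ell=k-n$ one has $a=n>v=n-1$, so the entry falls in the first branch of \eqref{eq:CatalanBinomial} rather than satisfying $a\le v\le u+a-1$ (the value is still $1=C_1(n-1,0)$, which the paper treats as a separate case).
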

\begin{proof}
We apply Lemma \ref{lem:multRule} with $a=2n-k-1+\ell$,  $b=0$ and $m=k-1-\ell$ to compute $\sigma_{2n-k-1+\ell}\sigma_{1}^{k-1-\ell}$. We observe that the only value of $i\in \{0,\dots,k-1-\ell\}$ such that $i\leq 2n-1-k+\ell+k-\ell-1-i\leq n-1$ is $i=n-1$, for which both inequalities are equalities.  This proves the first point. Furthermore, observing that for every $\ell=0,\dots,k-n-1$ we have $2n-k+\ell\leq n-1$, we get from \eqref{eq:CatalanBinomial}  and from the binomial identity $\binom{r}{s}=\binom{r}{r-s}$
\begin{multline*}
C_{2n-k+\ell}(k-n-\ell,n-1)=\binom{k-\ell-1}{n-1}-\binom{k-\ell-1}{k-n-\ell-1}=\\=\binom{k-\ell-1}{k-\ell-n}-\binom{k-\ell-1}{k-n-\ell-1}=C_1(n-1,k-n-\ell).
\end{multline*}
For $\ell=k-n$, we get $C_n(0,n-1)=1=C_1(n-1,0)$
.  Since  $\sigma_{n-1,n-1}$ generates the degree $0$ part of the Chow ring $A(\G(1,n))$ and is the class of a point, we get $\deg(\sigma_{n-1,n-1})_{\G(1,n)}=1 $. The statement for $\deg(V_{k})$ thus follows from the first point of the proposition together with Theorem \ref{thm:degree_in_terms _of_Chern_classes}.
\end{proof}

\vspace{0.2cm}
\noindent
\textit{Proof of Theorem \ref{thm:main}.}
Items \eqref{it1thmMain} and \eqref{it3thmMain} of Theorem \ref{thm:main} follow from Theorem \ref{thm:generic}. Concerning item \eqref{it2thmMain} (degree formula), we simply write \eqref{eq:final_degree} in a more convenient form. Recall that the integers $\lambda_m(d,k)\in \N$ in Theorem \ref{thm:main} are defined as the coefficients of the polynomial 
\begin{equation}\label{eq:coeff_cm}
P_{d,k}(X)=d\prod_{j=1}^{k-1}\left(jX+(d-2j)\right)=:\sum_{m=0}^{k-1} \lambda_m(d,k) X^m \,\in \N[X].
\end{equation}
Looking at \eqref{eq:rellk} or \eqref{eq:mukl}, we see that the reciprocal polynomial of $P_{d,k}$ has coefficients $\mu_0(d,k),\ldots\mu_{k-1}(d,k)$. It follows that
$$
\mu_{\ell}(d,k)=\lambda_{k-1-\ell}(d,k) \quad \ell=0,\ldots,k-1.
$$
Applying the change of index $m=k-\ell-1$ in \eqref{eq:final_degree} and using $\binom{m}{m-n}=\binom{m}{n}$ with convention $\binom{a-1}{a}=\binom{a}{-1}=0$ for any $a\in \N$, we get the desired formula
\begin{equation}\label{eq:final_degree_really}
N_k(n,d)=\sum_{m=n-1}^{k-1} \lambda_{m}(d,k)\left(\binom{m}{n-1}-\binom{m}{n}\right).
\end{equation}
$\hfill\square$

\vspace{0.2cm}
\noindent
\textit{Proof of Corollary \ref{cor:1}.} The first claim of Corollary \ref{cor:1} follows from Theorem \ref{thm:main} together with the fact that the biggest ruled subvariety $V_{\infty}$ coincides with $V_{d+1}$ thanks to \eqref{eq:stratification}. 
For the second claim of Corollary \ref{cor:1}, we simply remark that if $\deg\,V=2n-3$, then considering $k=2n-2=d+1$, we get that $V_{k}=V_\infty$ has dimension $1$ and is made of the lines contained in $V$. Hence, the number of such lines is $\deg(V_{2n-2})$. $\hfill\square$

%


\vspace{0.2cm}
\noindent
As another corollary, we recover the following result, which is \cite[Corollary 1.2]{BDSW}.
\begin{corollary}\label{cor:deg_flex_locus}
Let $V\subset \P^n$ be a general hypersurface of degree $d\ge n$. The flex locus $V_{n+1}$ is a codimension one subvariety of $V$ of degree
\begin{equation}\label{eq:deg_flex}
\deg V_{n+1}=d^2\cdot\sum_{i=1}^n \frac{n!}{i}-d(n+1)!
\end{equation}
\end{corollary}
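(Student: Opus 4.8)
The plan is to specialize the general degree formula \eqref{eq:final_degree} (equivalently \eqref{eq:final_degree_really}) to the case $k=n+1$ and check that it matches \eqref{eq:deg_flex}. Since $k=n+1$ requires $d\ge n$ to have $k\le d+1$, and $k=n+1\le 2n-1$ whenever $n\ge 2$, Theorem~\ref{thm:main} applies and gives $\dim V_{n+1}=2n-(n+1)-1=n-2$, which is indeed codimension one in $V$. So the statement about dimension is immediate, and only the degree computation requires work.

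The key observation is that when $k=n+1$, the range $\ell\in\{0,\dots,k-n\}=\{0,1\}$ has only two terms. First I would evaluate $C_1(n-1,k-n-\ell)=C_1(n-1,1-\ell)$: for $\ell=1$ this is $C_1(n-1,0)=1$, and for $\ell=0$ this is $C_1(n-1,1)=\binom{n}{1}-\binom{n}{0}=n-1$ using \eqref{eq:CatalanBinomial}. Then \eqref{eq:final_degree} reads $\deg V_{n+1}=(n-1)\mu_0(d,n+1)+\mu_1(d,n+1)$. Next I would plug in the definitions \eqref{eq:rellk}: $\mu_0(d,n+1)=d\cdot n!$ and $\mu_1(d,n+1)=d\cdot n!\sum_{i=1}^{n}\frac{d-2i}{i}=d\,n!\bigl(d\sum_{i=1}^n\frac1i-2n\bigr)$. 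Therefore
\[
\deg V_{n+1}=d\,n!\Bigl((n-1)+d\sum_{i=1}^n\tfrac1i-2n\Bigr)=d\,n!\Bigl(d\sum_{i=1}^n\tfrac1i-(n+1)\Bigr)=d^2\sum_{i=1}^n\tfrac{n!}{i}-d(n+1)!,
\]
which is exactly \eqref{eq:deg_flex}.

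There is no real obstacle here; the only thing to be careful about is the bookkeeping of the Catalan entries and the index conventions ($\binom{n-1}{n}=0$, the order of the trapezoid being $a-b+1$ with $b=0$), but these are all fixed by Definition~\ref{def:CatTrap} and the discussion after Theorem~\ref{thm:main}. One could alternatively derive the same answer from the $\lambda_m$ form \eqref{eq:final_degree_really} with $m\in\{n-1,n\}$: the leading-type coefficient bookkeeping of $P_{d,n+1}(X)=d\prod_{j=1}^n(jX+d-2j)$ gives $\lambda_n(d,n+1)=d\cdot n!$ and $\lambda_{n-1}(d,n+1)=d\,n!\sum_{j=1}^n\frac{d-2j}{j}$, and then $\binom{n}{n-1}-\binom{n}{n}=n-1$ while $\binom{n-1}{n-1}-\binom{n-1}{n}=1$ reproduces the same two-term sum. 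Either route is a short elementary verification.
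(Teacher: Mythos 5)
Your proposal is correct and follows essentially the same route as the paper: specialize the general degree formula to $k=n+1$ and evaluate the two surviving terms. The only cosmetic difference is that you work primarily with the $\mu_\ell$/Catalan form \eqref{eq:final_degree} while the paper uses the equivalent $\lambda_m$ form \eqref{eq:final_degree_really} (your ``alternative route''), and since $\mu_\ell(d,k)=\lambda_{k-1-\ell}(d,k)$ the two computations coincide term by term.
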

\begin{proof}
Setting $k=n+1$ in \eqref{eq:final_degree_really}, we get 
$$
\deg(V_{n+1})=\lambda_{n-1}(d,n+1)+(n-1)\lambda_{n}(d,n+1).
$$
Looking at \eqref{eq:coeff_cm} with $k=n+1$, we compute
$$
P_{d,n+1}(X)=dn!\left(X^n+ X^{n-1}\sum_{i=1}^n\frac{d-2i}{i}+\cdots\right).
$$
Hence $\lambda_{n}(d,n+1)=dn!$ while $$\lambda_{n-1}(d,n+1)=dn!\sum_{i=1}^n\frac{d-2i}{i}=d^2 \sum_{i=1}^n \frac{n!}{i} - 2 dn \,n!\,.$$
The claimed formula follows.
\end{proof}

\begin{remark}\label{rem:flex_vs_hyperflex}
The writing of $\deg V_{n+1}$ as in \eqref{eq:deg_flex}  highlights some geometric fetures of $V_{n+1}$. Indeed, the first term in the formula comes from the intersection of $V=(F=0)$ with the zero locus of a multivariate resultant induced by Lemma  \ref{lem:Sigmak_local_equations} applied with $k=n+1$, while the second term comes from an excess of intersection at infinity due to an extra factor of the resultant modulo $F$. The authors in \cite{BDSW} succeed to identify this extra factor, and prove in such a way that $V_{n+1}=(F=G=0)$ is a complete intersection, giving moreover an explicit formula for the (non unique) homogeneous polynomial $G$. We may wonder if the hyperflex locus is also a complete intersection for higher values of $k$.  The following  example will show that this is not the case in general. 
 \end{remark}
\begin{example}
Thanks to Theorem \ref{thm:main}, we explicitely compute the values of the degree of the $k$-flex locus of a generic hypersurface $V$ of degree $d$ in $\mathbb P^n$, for $n\leq 6$ and the meaningful values of $k$. Hence, in the following tables the reader finds $\deg(V_k)=N_k(n,d)$, for $n\leq 6$ and $n+1\leq k\leq 2n-1$. 

\vspace{0.4cm}
\noindent
\adjustbox{max width=\textwidth}{
    \centering
\begin{tabular}{|c||*{3}{c|}}
\hline\hline
\backslashbox{$n$}{$k$} & $n+1$ & $n+2$  & $n+3$ 
\\
\hline
\raisebox{0em}[1.5em][0.5em]{$2$} & $3 d^{2}-6 d $ & $0$ & $0$ 
\\ \hline
\raisebox{0em}[1.5em][0.5em]{$3$} & $11 d^{2}-24 d $ & $35 d^{3}-200 d^{2}+240 d $ & $0$
\\ \hline
\raisebox{0em}[1.5em][0.5em]{$4$} & $50 d^{2}-120 d $ & $225 d^{3}-1370 d^{2}+1800 d $ & $735 d^{4}-8120 d^{3}+26460 d^{2}-25200 d $
\\ \hline
\raisebox{0em}[1.5em][0.5em]{ $5$} & $274 d^{2}-720 d $ & $1624 d^{3}-10584 d^{2}+15120 d $ & $6769 d^{4}-78792 d^{3}+274428 d^{2}-282240 d $ 
\\  \hline
\raisebox{0em}[1.5em][0.5em]{ $6$} & $1764 d^{2}-5040 d $ & $13132 d^{3}-91476 d^{2}+141120 d $ & $67284 d^{4}-826868 d^{3}+3068352 d^{2}-3386880 d $ 
\\
\hline\hline
\end{tabular}
}

\vskip.5cm
\noindent
\adjustbox{max width=\textwidth}{
    \centering
\begin{tabular}{|c||*{2}{c|}}
\hline\hline
\backslashbox{$n$}{$k$} & $n+4$ & $n+5$
\\
\hline
\raisebox{0em}[1.5em][0.5em]{$5$} & $22449 d^{5}-403704 d^{4}+2480604 d^{3}-6136704 d^{2}+5080320 d $ & $0$ 
\\ \hline
\raisebox{0em}[1.5em][0.5em]{$6$} & $269325 d^{5}-5065760 d^{4}+32835600 d^{3}-86232384 d^{2}+76204800 d $ & $902055 d^{6}-23918510 d^{5}+235466000 d^{4}-1071300384 d^{3}+2232014400 d^{2}-1676505600 d$ 
\\ \hline
\end{tabular}
}

\vskip.5cm

 
Regarding complete intersection issues, the first non trivial example is the $5$-flex locus of a general surface $S\subset \P^3$ of degree $d\ge 5$. It follows from this table (see also \cite[Section 11.1.3]{EisHar}) that $S_5$ is a zero-dimensional reduced subvariety $S_5\subset S$ of degree
$$
N_5(d,5)=35 d^3-200 d^2+240 d=5d(d-4)(7d-12).
$$
By Bézout's theorem, this factorization does not exclude that $S_5$ is a complete intersection in $\P^3$ for all $d$.  However, for $n=5$, $k=8$ and $d=53$, we find the prime factorization
$$
N_8(53,5)=42436258837= 7 \times 53 \times 114383447.
$$ 
Since $\codim V_8=4$ here, this excludes definitely that $V_8$ is a complete intersection.
\end{example}

\section*{Acknowledgments}
The authors acknowledge the IEA (International Emerging Action) project PAriAlPP
(Probl\`emes sur l’Arithm\'etique et l’Alg\`ebre des Petits Points) of the CNRS for the
financial support. The first author is member of the INdAM group GNSAGA.

\bigskip
\bibliographystyle{abbrv}

\begin{thebibliography}{CLO9822}
\bibitem{BDSW}
L. Bus\'e, C. D'Andrea, M. Sombra,  M. Weimann.
\newblock{\em The geometry of the flex locus of a hypersurface.\/}
\newblock{ Pacific J. Math. 304(2), 419--437 (2020).}

\bibitem{EisHar}
D. Eisenbud, J. Harris. 
\newblock{\em  3264 and All That; Intersection Theory in Algebraic Geometry.\/}
\newblock{Cambridge Univ. Press., 2016 }

\bibitem{Ful}
W. Fulton. 
\newblock{\em Intersection theory.\/}
\newblock Springer-Verlag New York, second edition (1998).


\bibitem{GK15}
L. Guth and N. H. Katz.
\newblock{\em On the Erd\"os distinct distances problem in the plane.\/}
\newblock{ Ann. of Math. 181(2) , 155--190 (2015).}
\bibitem{hart}
R.~Hartshorne.
\newblock{\em Algebraic geometry\/} 
\newblock Graduate Texts in Mathematics, No. 52, Springer, New York-Heidelberg, 1977.

 
\bibitem{Kat14}
N. Katz.
\newblock{\em The flecnode polynomial: a central object in incidence geometry\/}
\newblock Proc. ICM 2014, Vol. III, 303--314 (2014).

\bibitem{Kol15}
J. Kollár.
\newblock{\em Szemer\'edi-Trotter-type theorems in dimension 3.}
\newblock{Adv. Math. 271, 30--61 (2015).}

%
\bibitem{salmon}
G.~Salmon.
\newblock{\em On Quaternary Cubics.\/}
\newblock{Phil. Trans. R. Soc. London 150, 229--239 (1860).}

\bibitem{Reuv}
S. Reuveni.
\newblock{\em Catalan's trapezoids\/}
\newblock{Probab. Engrg. Inform. Sci. 28(3), 353--361 (2014).}

\bibitem{SS18}
M. Sharir and N. Solomon.
\newblock{\em Incidences between points and lines on two- and three-
dimensional varieties}
\newblock{Discrete Comput. Geom. 59, 88--130 (2018).}

\bibitem{Tao14}
T. Tao.
\newblock{\em The {M}onge-{C}ayley-{S}almon theorem via~classical~differential~geometry.\/},
\newblock{\url{https://terrytao.wordpress.com/2014/03/28/}},
\newblock 2014.

\bibitem{Thomas}
K. Thomas.
\newblock{\em Catalan Numbers with Applications. Oxford\/} 
\newblock{Oxford University Press, 2008.}
\end{thebibliography}
\def\cprime{$'$} \def\cprime{$'$} \def\cprime{$'$}

\end{document}